\newtheorem{theorem}{Theorem}[section]
\newtheorem{lemma}[theorem]{Lemma}
\theoremstyle{definition}
\newtheorem{definition}[theorem]{Definition}
\theoremstyle{remark}
\newtheorem{remark}[theorem]{Remark}
\numberwithin{equation}{section}
\newcommand{\xp}{\frac{2}{1+\beta}}
\newcommand{\eo}{\epsilon_0}
\newcommand{\al} {\alpha}
\newcommand{\ba} {\beta}
\newcommand{\de} {\delta}
\newcommand{\ga} {\gamma}
\newcommand{\Om} {\Omega}
\newcommand{\pa} {\partial}
\newcommand{\De} {\Delta}
\newcommand{\la} {\lambda}
\newcommand{\si} {\sigma}
\newcommand{\rar}{\rightarrow}
\newcommand{\noi} {\noindent}
\newcommand{\na} {\nabla}
\newcommand{\Aa}{\mathcal{A}}
\newcommand{\ci}{C_0^1(\overline{\Om})}
\title[Infinite semipositone problem for p-q Laplacian]
{On a class of infinite semipositone problems for (p,q) Laplace operator}	
\author{R. Dhanya, R. Harish, Sarbani Pramanik}
\address{School of Mathematics, Indian Institute of Science Education and Research Thiruvananthapuram, Maruthamala, Thiruvananthapuram, 695551}
\subjclass{Primary: 35A15, 35B33, 35R11; Secondary: 35J20.}
\keywords{p-q Laplacian, semipositone problems, maximal solution}
\begin{document} 
\begin{abstract}
We analyze a non-linear elliptic boundary value problem, that involves  $(p, q)$ Laplace operator, for the existence of its positive solution in an arbitrary smooth bounded domain. The non-linearity here is driven by  a continuous function in $(0,\infty)$ which is singular, monotonically increasing and eventually positive. We prove the existence of a positive solution of this problem  using a fixed point theorem due to Amann\cite{amann1976fixed}. In addition, for a specific nonlinearity we derive that the obtained solution is maximal in nature. The main results obtained here are first of its kind for a $(p, q)$ Laplace operator in an arbitrary bounded domain.

\end{abstract}	
\maketitle	
\section{Introduction}
We consider the following boundary value problem
\begin{align}\label{problem}
    -\De_p u - \De_q u &= \la \frac{f(u)}{u^\ba} \ \text{in} \ \Om \nonumber \\
    u &>0 \ \text{in} \ \Om \tag{$P_\lambda$} \\
    u &=0 \ \text{on} \ \pa\Om\nonumber
\end{align}
where  $\Om$ is a bounded domain in $\mathbb{R}^N$  with a smooth boundary,  $0<\ba<1$, $1<q<p<\infty$ and $\la>0$ is a parameter. The term $\De_s u:= div(|\na u|^{s-2}\na u)$. We make the following assumptions on $f$:
\begin{enumerate}
    \item[(H1)] $f:[0,\infty) \rightarrow \mathbb{R}$ is a continuous monotonically increasing function with $f(0)<0$ and $\frac{f(t)}{t^{\ba}}$ is monotonically increasing.
    \item[(H2)]  $\frac{f(t)}{t^\beta}>0$ for some $t>0.$
\end{enumerate}
Note that here $\lim_{s \rightarrow 0+} \frac{f(s)}{s^\ba} = - \infty$ and such problems are called infinite semipositone problems. Establishing the existence of a positive solution to such problems are challenging due to the presence of the singular term which is negative. 
Moreover, due to the non-homogeneous nature of the associated operator, many of the well known methods are not directly applicable.\par

The motivation to study the non-homogeneous operator $(p, q)$ Laplacian emanates from pure mathematical interest and also due to its  applications in several branches of science. The operator $(p, q)$ Laplacian, a generalisation of the $p$-Laplacian, is defined as an approximation of the mean curvature operator in Minkowski space which arises in the study of Born-Infeld equation, see \cite{bonheure2016electrostatic} and \cite{pomponio2018some}. The structure of the energy functional associated with $(p, q)$ Laplacian drives its application into diverse fields, for instance in the study of the geometry of anisotropic materials, modelling of elementary particles (\cite{benci1998soliton}, \cite{derrick1964comments}), chemical reaction design (\cite{maki1980r}, \cite{cherfils2005stationary}), elasticity theory (\cite{zhikov1987averaging}), biophysics (\cite{fife1979lecture}), plasma physics (\cite{wilhelmsson1987explosive}) etc. are a few to name. Though the non-homogeneity of the operator makes the mathematical treatments strenuous,  it is  well studied in literature, see \cite{cherfils2005stationary},  \cite{marano2017some},  \cite{faria2014comparison}, \cite{mugnai2014wang},\cite{figueiredo2013existence}, \cite{gasinski2020existence} and the references therein.

In general,  the problem 
\begin{align}\label{problemsemi}
    -\De_p u - \De_q u &=  h(u) \ \text{in} \ \Om \nonumber \\
    u &>0 \ \text{in} \ \Om \\
    u &=0 \ \text{on} \ \pa\Om\nonumber
\end{align}
is called semipositone problem if $h$ is a monotonically increasing  sign-changing continuous function such that $h(0)\in (-\infty,0).$  If $h(0)=-\infty,$ we call this an infinite semipositone problem. Semipositone problems exhibit certain interesting phenomena which are not expected for positone problems (i.e. when $h(0)>0$). For example,  non-negative solutions for semipositone problems can have interior zeros (see \cite{castro1988non})  due to which  proving the existence of positive solutions to semipositone problems are comparatively harder. However topological tools such as degree theory, fixed point methods, bifurcation theory  etc. are mainly found reliable in obtaining the positive solution to semipositone problems (see \cite{chhetri2015existence}, \cite{hai2010singular}, \cite{hai2012positive} and \cite{dhanya2018positive}). But all these methods depend strongly on the qualitative properties of the solutions, such as regularity, asymptotic behavior, a priori estimates,  stability etc. Due to the non-homogeneity of the operator, such results were not available for the $(p,q)$ Laplace operator until recently. In \cite{giacomoni2021sobolev}, Giacomoni et.al. proved $C^{1,\alpha}$ boundary regularity for the solutions of singular elliptic problems involving $(p,q)$ Laplacian and  in \cite{Indulekha} authors proved an asymptotic behavior of solutions for such problems. In light of these latest developments, we address the question of the existence of a positive solution to an infinite semipositone problem using sub-super solution approach.

In the recent years, semipositone problems for non-homogeneous operators are gaining considerable attention. Das et al. in \cite{das2020existence} proved the existence of a positive solution for such a problem in an arbitrary domain when the reaction term is non-singular. In \cite{sim2021positive}, authors study an infinite semipositone problem for $(p,q)$ Laplacian in an interval using a fixed point theorem. Hai and co-authors in \cite{hai2023positive} have recently studied a similar equation in a ball by assuming the radial nature of the solution and converting it to an ODE. They have used the representation formula for the solution to obtain the required estimates and prove the existence of a positive solution. The approach discussed there cannot be extended to an  arbitrary bounded open set as the method entirely depends on the symmetry of the domain. Nevertheless, Hai et al., in a series of papers  \cite{alotaibi2020singular}, \cite{hai2013singular}, \cite{hai2010singular}, \cite{hai2011class} and \cite{hai2012infinite}, has considered similar problems for $p$-Laplace operator in arbitrary bounded open sets in $\mathbb{R}^N$. Interested readers may also refer to \cite{alves2020existence}, \cite{anello2017two},\cite{choubin2013positive}, \cite{goddard2013existence},  and \cite{khodja2016positive} for semilinear or quasilinear elliptic problems with indefinite sign non-linearity in bounded domains. The novelty of our paper lies in the fact that the results we obtain are first of its kind for a $(p,q)$ Laplace operator in an arbitrary domain with a singular non-linearity.

In Section 2, we focus on a general theory regarding the existence of positive minimal and maximal solutions and in Section 3 and 4, we discuss specific examples.
\begin{theorem}\label{thm1}
Suppose $f$ satisfies (H1) and (H2). Also assume that there exists an ordered pair $[\psi,\phi]$ of sub- and supersolutions for \eqref{problem} such that $\psi, \phi \in C^1_0(\overline{\Om})$, there exists a positive constant $c_0$ satisfying $c_0 d(x) \leq \psi(x) \leq \phi(x)$ and $\|\psi\|_\infty>\ba_0,$ where $\beta_0=\inf\{t>0: \frac{f(t)}{t^\beta} >0\}$. Then the problem \eqref{problem} admits a maximal and a minimal solution within the ordered interval $[\psi,\phi]$.
\end{theorem}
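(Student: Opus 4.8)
The plan is to construct a monotone iteration scheme that converges to the maximal and minimal solutions, using the fixed point theorem of Amann referenced in the abstract. The strategy relies on recasting the differential inclusion as a fixed point problem for a suitable solution operator, and then exploiting the order structure provided by the sub- and supersolution pair $[\psi,\phi]$.

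\begin{proof}[Proof sketch]
First I would define a solution operator by truncating the nonlinearity to the interval $[\psi,\phi]$. For $w \in [\psi,\phi]$, consider the auxiliary problem
\begin{equation*}
    -\De_p u - \De_q u = \la\, g(x,w) \ \text{in} \ \Om, \quad u=0 \ \text{on} \ \pa\Om,
\end{equation*}
where $g(x,w) = \frac{f(w(x))}{w(x)^\ba}$ with $w$ replaced by $\max\{\psi(x),\min\{w(x),\phi(x)\}\}$ to keep the argument away from the singularity at the origin. The lower bound $c_0\, d(x) \le \psi(x)$ is precisely what controls the singular term $u^{-\ba}$: since $\psi$ is bounded below by a multiple of the distance function, the right-hand side $g(x,w)$ lies in an appropriate Lebesgue space, and by the regularity results cited (for instance \cite{giacomoni2021sobolev}) the unique weak solution $u = T(w)$ belongs to $C^1_0(\overline{\Om})$. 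This defines $T : [\psi,\phi] \to C^1_0(\overline{\Om})$.

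Next I would verify that $T$ is well-defined, compact, and order-preserving, and that it maps $[\psi,\phi]$ into itself. Monotonicity of $T$ follows from the weak comparison principle for the $(p,q)$ Laplacian together with hypothesis (H1), which guarantees that $\frac{f(t)}{t^\ba}$ is increasing so that $w_1 \le w_2$ implies $g(x,w_1) \le g(x,w_2)$. The invariance $T([\psi,\phi]) \subseteq [\psi,\phi]$ is where the sub- and supersolution inequalities are used: applying comparison against $\psi$ and $\phi$ shows $\psi \le T(\psi)$ and $T(\phi) \le \phi$, and monotonicity propagates this to the whole interval. Compactness of $T$ comes from the $C^{1,\al}$ regularity estimate, which yields a uniform bound in a Hölder space that embeds compactly into $C^1_0(\overline{\Om})$.

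With these properties in hand, Amann's fixed point theorem \cite{amann1976fixed} applies directly: an increasing compact operator on an ordered interval $[\psi,\phi]$ in an ordered Banach space possesses a minimal fixed point $\underline{u}$ and a maximal fixed point $\overline{u}$, obtained respectively as the monotone limits of the iterates $T^n(\psi) \nearrow \underline{u}$ and $T^n(\phi) \searrow \overline{u}$. Each fixed point is a solution of \eqref{problem}, and maximality and minimality within $[\psi,\phi]$ follow from the construction. The final point to address is that any such fixed point is genuinely positive and solves the \emph{original} (untruncated) problem; this is where the condition $\|\psi\|_\infty > \ba_0$ enters, ensuring the solution is large enough that the truncation is inactive on a set where $f$ is positive, so that the fixed point does not collapse to a degenerate or sign-changing object. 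The main obstacle I anticipate is establishing the order-preserving and self-mapping properties rigorously in the non-homogeneous setting, since the $(p,q)$ Laplacian is not homogeneous and the standard comparison arguments available for the $p$-Laplacian must be replaced by the comparison principle valid for this operator; verifying that $T$ is well-defined as a map into $C^1_0(\overline{\Om})$ also hinges delicately on the lower barrier $c_0\, d(x)$ to tame the singularity.
\end{proof}
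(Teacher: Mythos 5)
Your proposal is correct and follows essentially the same route as the paper: you define the solution operator on the order interval (the paper works directly on $\{u\in C^1_0(\overline{\Om}): u\geq \psi\}$, which makes your truncation redundant since it acts as the identity there), show it is increasing via the weak comparison principle and (H1), completely continuous via a uniform $C^{1,\al}$ bound plus the compact embedding into $C^1_0(\overline{\Om})$, and invariant on $[\psi,\phi]$ via the sub/supersolution inequalities, and then invoke Amann's Theorem 6.1 to obtain the minimal and maximal fixed points by monotone iteration. The one inaccurate detail is your reading of $\|\psi\|_\infty>\ba_0$: it is not needed to ``deactivate'' the truncation (any fixed point lies in $[\psi,\phi]$, where your truncation is the identity, so it automatically solves the original problem); rather it is a compatibility condition, since testing \eqref{problem} with $u$ shows every positive solution must satisfy $\|u\|_\infty>\ba_0$.
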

The above theorem is rather a straight forward extension of results in \cite{dhanya2015three} to the $(p, q)$ Laplace operator, thanks to the global regularity result proved in \cite{giacomoni2021sobolev}. Rather the nontrivial step is to prove the existence of sub-supersolutions which satisfies the assumptions mentioned in the above theorem. Non-linearity combined with the non-homogeneity of the operator makes this task harder. In this context we wish to emphasize that the results we prove here are not yet known even for the p-Laplacian operator.\par
In the next theorem under two more hypotheses on the non-linear function $f$ we prove the existence of an ordered pair of sub-supersolutions bounded below by $c_0 d(x).$ We assume that
\begin{enumerate}
    \item[(H3)] There exist constants $\si\in (0,\beta+1)$ and $A>0$ such that $f(s)\geq A s^{\sigma}$ for $s>>1$.
    \item[(H4)] There exist $\ga>0$ and $B>0$ such that $\ba \leq \ga< \ba+1$ and $f(s) \leq Bs^\ga$ for $s\geq 0$.
\end{enumerate}
The condition (H4) implies $\lim_{s\to\infty} \frac{f(s)}{s^{\ba+1}}=0.$
\begin{theorem}\label{thm2}
Suppose that $f$ satisfies hypothesis (H1)-(H4) and $2<q<p<\infty$. Then for $\lambda>>1$, there exists a positive solution for the problem \eqref{problem}.
\end{theorem}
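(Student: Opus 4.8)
The plan is to apply Theorem \ref{thm1}: it suffices to exhibit, for $\la\gg1$, an ordered pair $[\psi,\phi]$ of sub- and supersolutions of \eqref{problem} with $\psi,\phi\in C^1_0(\overline{\Om})$, $c_0 d(x)\le\psi\le\phi$ and $\|\psi\|_\infty>\beta_0$. Since $f(t)/t^\ba$ is increasing and eventually positive, the reaction $s\mapsto \la f(s)/s^\ba$ is negative and blows down like $-d(x)^{-\ba}$ near $\pa\Om$ (where every admissible trial function vanishes), while by (H4) it grows strictly sublinearly at infinity. These two features dictate the two constructions below, and the non-homogeneity of the operator forces us to treat the leading $p$-term and the lower order $q$-term separately throughout.

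\emph{Supersolution.} Here the sublinear growth does the work. I would take $\phi$ to be the unique positive solution of the auxiliary torsion-type problem $-\De_p\phi-\De_q\phi=M$ in $\Om$, $\phi=0$ on $\pa\Om$, for a constant $M=M(\la)$ fixed large. By the global regularity of \cite{giacomoni2021sobolev} and Hopf's lemma, $\phi\in C^1_0(\overline{\Om})$ and $\phi\asymp d(x)$, with $\|\phi\|_\infty$ of order $M^{1/(p-1)}$ for large $M$ because the $p$-term dominates. Near $\pa\Om$ the reaction is negative, so $M\ge \la f(\phi)/\phi^\ba$ is automatic; in the interior, (H4) together with $\ga\ge\ba$ gives $\la f(\phi)/\phi^\ba\le \la B\|\phi\|_\infty^{\ga-\ba}\le C\la M^{(\ga-\ba)/(p-1)}$, and since $\ga-\ba<1<p-1$ the exponent $(\ga-\ba)/(p-1)<1$, so choosing $M$ large (depending on $\la$) makes $M$ dominate. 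Thus $\phi$ is a supersolution, and a standard comparison gives $\phi\ge\psi$ after possibly enlarging $M$.

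\emph{Subsolution (the crux).} A constant-forcing trial function cannot work: if $\psi\asymp d(x)$ with bounded gradient then $-\De_p\psi-\De_q\psi$ stays bounded near $\pa\Om$, whereas the reaction tends to $-\infty$, so the subsolution inequality fails in the boundary layer. The remedy is to give $\psi$ a boundary profile that makes the operator blow down at the correct rate. Concretely I would build $\psi$ so that near $\pa\Om$ it behaves like $c_1 d(x)+c_2 d(x)^{s}$ with $s=2-\ba\in(1,2)$: then $\De(d^s)\sim s(s-1)d^{s-2}$, so $-\De_p\psi-\De_q\psi\sim -(c_1^{p-2}+c_1^{q-2})c_2 s(s-1)\,d^{-\ba}$, exactly matching the order $-\la |f(0)|c_1^{-\ba}d^{-\ba}$ of the reaction; choosing $c_2$ proportional to $\la$ then forces $-\De_p\psi-\De_q\psi\le \la f(\psi)/\psi^\ba$ near $\pa\Om$ while keeping $\psi\ge c_1 d(x)$ and $\psi\in C^1_0(\overline{\Om})$. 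In the interior $\psi$ is taken to be a large plateau of height comparable to $\la^{a}$ with $a=1/(p-1-\si+\ba)$ (note $a>0$ since $\si<\ba+1$ and $p>2$); there $-\De_p\psi-\De_q\psi$ is controlled by the plateau size raised to $p-1$, while (H3) supplies the matching lower bound $\la f(\psi)/\psi^\ba\ge \la A\psi^{\si-\ba}$, and the exponent balance $a(p-1)=1+a(\si-\ba)$ is precisely what makes the inequality hold for $\la\gg1$. For such $\la$ the plateau height exceeds $\beta_0$, giving $\|\psi\|_\infty>\beta_0$.

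\emph{Finishing.} Having produced $\psi\le\phi$ with the required boundary lower bound and regularity (the latter again from \cite{giacomoni2021sobolev}), Theorem \ref{thm1} yields a solution in $[\psi,\phi]$, which is the desired positive solution. The main obstacle is the subsolution: one must engineer a single $C^1_0$ function whose $(p,q)$-operator blows down like $d^{-\ba}$ in the boundary layer yet rises above $\beta_0$ on a macroscopic interior set, and then verify the subsolution inequality uniformly through the intermediate gluing region. The hypothesis $2<q<p$ enters exactly here, ensuring the operator is non-degenerate and that the $q$-term is a genuinely lower-order perturbation of the $p$-term in all three regimes (boundary layer, transition, plateau), while $\si<\ba+1$ is what keeps the scaling exponent $a$ positive and the interior balance solvable.
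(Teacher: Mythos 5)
Your overall strategy is the paper's (construct an ordered sub/supersolution pair and invoke Theorem \ref{thm1}), and you correctly locate the crux in the subsolution's boundary behavior; but your subsolution construction has a genuine gap, sitting exactly at the step you flag and defer ("verify the subsolution inequality uniformly through the intermediate gluing region"). Two things go wrong. First, your exponent choice $s=2-\ba$ matches the singularity of the operator \emph{exactly} to that of the reaction ($d^{-\ba}$ against $d^{-\ba}$), so the boundary inequality can only be won at the level of constants, forcing $c_2\gtrsim\la$ as you say. But then at a fixed depth $d\approx\delta$ the nonsingular part of $-\De_p\psi$ (the term carrying $\De d$ and $\bigl(c_1+c_2 s\, d^{s-1}\bigr)^{p-1}$) is of size $\la^{p-1}$, while the reaction there is at most of order $\la^{1+a(\si-\ba)}=\la^{a(p-1)}$ with $a=\frac{1}{p-1+\ba-\si}$; since $a<1$ whenever $p>2+\si-\ba$, the operator overwhelms the reaction and the subsolution inequality already fails inside the layer, so the layer must shrink with $\la$. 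Second, the glue: on the way up to the plateau of height $\la^{a}$ the function must pass through the range $(0,\beta_0]$ where $f\le 0$, and there the operator must be nonpositive; but climbing to height $\la^{a}$ across a region of fixed width forces gradients of order $\la^{a}$, and wherever the profile is concave (it must be, to flatten into the plateau) the operator is \emph{positive} of order $\la^{a(p-1)}\gg\la$. Your sketch provides no mechanism confining this concavity, and the curvature terms coming from $\De d$, to the set where $\psi$ is already comparable to $\la^a$ and (H3) can absorb them.

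The paper's Lemma \ref{lemma_sub} resolves precisely these two points by using a single global profile $\psi=\la^r\bigl(\phi_1+\phi_1^{2/(1+\ba)}\bigr)$, $\phi_1$ the first Dirichlet eigenfunction of $-\De$, with $\frac{1}{p-1+\ba}<r<\frac{1}{p-1+\ba-\si}$. The exponent $\frac{2}{1+\ba}$ is strictly smaller than $2-\ba$, so the operator blows down like $d^{-2\ba/(1+\ba)}$, \emph{strictly more} singular than the reaction's $d^{-\ba}$; the boundary inequality then follows from $r(p-1+\ba)>1$ alone, with no largeness required of the profile's coefficients, which is what allows $r$ to stay below the interior threshold $\frac{1}{p-1+\ba-\si}$. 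Moreover, since the whole profile is scaled by $\la^r$, at the fixed depth $\delta$ (chosen via Hopf's lemma so that $L_p(\phi_1)<-m$ and $L_q(\phi_1)<0$, which is where $q>2$ is genuinely used, through $\De_q\phi_1\in L^\infty$) the subsolution already has height $\la^r\mu\gg\beta_0$; hence the region where the reaction is negative lies entirely inside $\Om_\delta$, where the Hopf gradient term keeps the operator sufficiently negative, and no gluing region exists. A secondary gap: your ordering step "a standard comparison gives $\phi\ge\psi$" is not available, since the nonlinearity is increasing and sub- and supersolutions of the same problem need not be ordered; with your $\phi$ vanishing on $\pa\Om$ you would need the Hopf constant of the $(p,q)$-torsion function to grow with $M$, a nontrivial fact of the flavor the paper only establishes later (Theorem \ref{est}). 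The paper sidesteps this by taking $\phi=m(\la)e$ with $e$ the $p$-torsion function of a larger ball $B_R(0)\supset\overline{\Om}$, so that $\phi$ has a positive minimum on $\overline{\Om}$ and the ordering $\phi\ge\psi$ is obtained simply by enlarging $m(\la)$.
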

Motivated by the constructions given in \cite{dhanya2015three} and \cite{das2020existence}, Theorem \ref{thm2} is proved in Section 3 by the sub-supersolution approach. The additional hypothesis $q>2$ helps us to circumvent the non-homogeneity of the operator in a clever manner.  If $p=q,$  this restriction of $q>2$ is not necessary as the operator is no more inhomogeneous.  

In section 4 we prove that \eqref{problem} admits a maximal solution for a specific non-linear function $f$. 
The existence of a global minimal or maximal solutions is the foremost step in the bifurcation analysis of a semilinear elliptic problem, see \cite{dhanya2018positive} where a similar problem is studied in case of Laplacian. In \cite{dhanya2018positive}, author has shown the existence of continuum of a branch of positive solutions emanating from infinity and a multiplicity result under a specific hypothesis. This phenomenon of multiple positive solutions for non-linearities with indefinite sign is difficult to obtain and is understood completely only in the case of Laplacian in dimension one \cite{diaz2009branches} and for the strictly semipositone problems in higher dimension\cite{CASTRO1994425}. We believe that Theorem \ref{max_thm} along with the understanding of the weighted eigenvalue problem for a p-Laplacian can throw some light into the multiplicity results for the associated quasilinear semipositone problem. 

Our second main result in this paper is proved in Theorem \ref{max_thm}, Section 4.  Before stating the theorem we shall recall the function space where we seek a positive solution and its properties as given in \cite{amann1976fixed}. Let $\mathcal{P}=\{u\in C^1_0(\overline{\Omega}) : u\geq 0 \mbox{ in } \Omega\}$ denote the positive cone in $C^1_0(\overline{\Omega}).$ Then the interior of $\mathcal{P}$ denoted by $\mathcal{P}^\circ$ consists of  set of all $\{u\in C^1_0(\overline{\Omega}): \frac{\pa u}{\pa \nu}<0\}$ where $\nu$ denotes the outward normal vector on $\pa\Omega.$ We shall prove that \eqref{problem} admits a maximal positive solution in $\mathcal{P}^\circ.$
\begin{theorem}\label{max_thm}
  Let $\beta\in (0,1) , 2<q<p<\infty,$ $\sigma\in (0,q-1)$ and $f(s)=(s^{\si+\beta}-1).$ Then the  problem \eqref{problem} admits maximal solution $u_\la\in \mathcal{P}^\circ $ for large $\lambda$. Also, \eqref{problem} does not admit any positive solution when $\la$ is small.   
\end{theorem}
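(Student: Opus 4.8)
The plan is to prove Theorem \ref{max_thm} in three parts: existence for large $\la$, nonexistence for small $\la$, and maximality of the obtained solution. For the existence part, I would verify that the specific nonlinearity $f(s)=s^{\si+\ba}-1$ satisfies the hypotheses of Theorem \ref{thm2}, so that a positive solution exists for $\la\gg 1$. Indeed $f(0)=-1<0$, $f$ is increasing, and $f(t)/t^\ba=t^\si-t^{-\ba}$ is increasing, giving (H1); (H2) holds since $f(t)/t^\ba>0$ once $t>1$. For (H3) note $f(s)=s^{\si+\ba}-1\geq \tfrac12 s^{\si+\ba}$ for $s\gg1$, and since $\si<q-1<p-1$ we have $\si+\ba<\ba+1$ as required with exponent $\si+\ba\in(0,\ba+1)$. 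For (H4), $f(s)\leq s^{\si+\ba}$ for $s\geq 1$ and we may choose $\ga=\si+\ba$ (noting $\ga\geq\ba$ and $\ga<\ba+1$), with a suitable constant absorbing the behaviour on $[0,1]$. Thus Theorem \ref{thm2} applies directly and yields a positive solution for large $\la$.

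The nonexistence part for small $\la$ is where I would invoke the structure of the problem. The idea is to test the equation against the first eigenfunction $\va_1>0$ of the $q$-Laplacian (or compare with an eigenvalue-type quantity) and exploit that $f$ is bounded below by $-1$. More concretely, any positive solution $u$ satisfies $-\De_p u-\De_q u=\la(u^\si-u^{-\ba})$; I would estimate the right-hand side from above by $\la\, u^\si$ for the positive contribution while the singular term $-\la u^{-\ba}$ forces the reaction to be strongly negative near $\pa\Om$. Using the variational characterisation and the sublinear growth $\si<q-1$, a test-function argument shows that a positive solution can exist only if $\la$ exceeds a threshold: integrating against $u$ itself and using the embedding/Poincar\'e inequality, the left-hand side dominates of order $\|u\|^q$ while the right-hand side is controlled by $\la\|u\|^{\si+1}$ with $\si+1<q$, so for small $\la$ the balance is impossible. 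The key point is that the singular negative term only helps the nonexistence, so the estimate need only track the positive part $\la u^\si$.

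For maximality in $\mathcal{P}^\circ$, I would apply Theorem \ref{thm1} together with a limiting/ordering argument. Once a positive solution exists, the regularity result from \cite{giacomoni2021sobolev} places it in $C^{1,\al}_0(\overline\Om)$, and the Hopf-type boundary estimate (giving $\pa u/\pa\nu<0$) puts it in $\mathcal{P}^\circ$ and supplies a lower barrier $c_0 d(x)\leq u$. The plan is to construct an ordered pair $[\psi,\phi]$ of sub- and supersolutions with $\psi\geq c_0 d(x)$ and $\|\psi\|_\infty>\ba_0$, where here $\ba_0=\inf\{t>0:t^\si>t^{-\ba}\}=1$, and then invoke Theorem \ref{thm1} to obtain a maximal solution $u_\la$ in $[\psi,\phi]$. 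To upgrade this to maximality among \emph{all} positive solutions, I would show that every positive solution of \eqref{problem} is automatically a subsolution lying below the supersolution $\phi$ (by taking $\phi$ sufficiently large, using the growth control (H4)/$\si<q-1$ to build an arbitrarily large ordered supersolution), so that any positive solution sits in $[\psi,\phi]$ and is therefore dominated by the maximal solution produced by Theorem \ref{thm1}.

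The main obstacle I anticipate is the construction of a global supersolution $\phi$ that simultaneously lies above every conceivable positive solution and remains an ordered pair with a subsolution bounded below by $c_0 d(x)$; this is exactly where the non-homogeneity of the $(p,q)$ operator obstructs the usual scaling tricks, and where the hypothesis $q>2$ and the sublinear exponent $\si<q-1$ must be used to tame the growth of $f(u)/u^\ba=u^\si-u^{-\ba}$. In particular, comparing the maximal solution obtained in the ordered interval against an arbitrary positive solution requires a comparison principle for $\dpq$ with the singular, monotone right-hand side, and verifying that the singular term does not destroy the monotonicity needed for the iteration scheme will be the delicate technical step.
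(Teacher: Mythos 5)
Your proposal has two genuine gaps, one in the existence/maximality step and one fatal flaw in the nonexistence step.

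First, the reduction to Theorem \ref{thm2} fails for part of the stated range of $\si$. You claim that $\si<q-1<p-1$ implies $\si+\ba<\ba+1$; this is equivalent to $\si<1$, whereas the theorem allows $\si\in(0,q-1)$ with $q>2$, so $\si\geq 1$ is admissible. For such $\si$ the function $f(s)=s^{\si+\ba}-1$ admits no bound $f(s)\leq Bs^\ga$ with $\ga<\ba+1$, i.e.\ (H4) genuinely fails, and neither Theorem \ref{thm2} nor the supersolution of Lemma \ref{superlemma} is available. The paper instead takes as supersolution the unique solution $\Phi_\la$ of the auxiliary problem \eqref{sup_est}, $-\De_p z-\De_q z=\la z^\si$, which exists by minimization precisely because $\si<q-1<p-1$. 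This choice also repairs your maximality step: the comparison principle you invoke for the full right-hand side $\la(s^\si-s^{-\ba})$ does not exist in the form you need, since Lemma \ref{comparison} requires $f(s)s^{1-q}$ to be non-increasing, and $\la(s^{\si+1-q}-s^{1-q-\ba})$ is \emph{not} monotone (the singular term has the wrong monotonicity). The paper's key observation is that any positive solution $u$ of \eqref{max_prblm} satisfies $-\De_p u-\De_q u\leq \la u^\si$, i.e.\ $u$ is a subsolution of \eqref{sup_est}, whose right-hand side $\la s^\si$ \emph{does} satisfy the monotonicity hypothesis; Lemma \ref{comparison} then gives $u\leq\Phi_\la$ for every solution, so $\Phi_\la$ is a global supersolution and Amann's theorem applied in $[\Psi_\la,\Phi_\la]$ produces the global maximal solution. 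Taking $\phi=m(\la)e$ "sufficiently large," as you suggest, does not by itself dominate every solution.

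Second, your nonexistence argument is wrong, and the error is conceptual. Testing with $u$ and using Poincar\'e and H\"older with $\si+1<q$ yields $c\|u\|^q\leq \la C\|u\|^{\si+1}$, i.e.\ $\|u\|^{q-\si-1}\leq C\la/c$. Since $q-\si-1>0$ this is merely an a priori \emph{smallness} bound on solutions, not a contradiction: in the sublinear regime the "balance" is perfectly possible for small $\|u\|$. Indeed, the positone problem $-\De_p u-\De_q u=\la u^\si$, which is exactly what remains if one "only tracks the positive part," has a positive solution for \emph{every} $\la>0$. So your statement that "the singular negative term only helps the nonexistence, so the estimate need only track the positive part" is precisely backwards: the singular term is the sole source of nonexistence. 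The paper's argument uses your smallness observation correctly: any positive solution satisfies $-\De_p u-\De_q u\leq\la M$ for $\la\leq 1$, hence $u\leq v_\la$ where $v_\la$ solves $-\De_p v_\la-\De_q v_\la=\la M$, and $v_\la\to 0$ in $C^1_0(\overline{\Om})$ as $\la\to 0^+$ (Proposition 10 of \cite{papageorgiou2020nonlinear}). Thus for small $\la$ one has $u<1$ throughout $\Om$, so the right-hand side $\la(u^\si-u^{-\ba})$ is strictly negative, and the comparison principle forces $u\leq 0$, contradicting positivity.
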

 Towards the end of this article, we make an attempt to understand the exact behavior of the solution of a "sublinear" elliptic problem given in \eqref{sup_est}. The estimate we obtain in Theorem \ref{est} is of independent interest and the result we obtain may be applicable to several other non-linear elliptic problems involving $(p,q)$ Laplacian. Results we obtain is restricted to the case $2<q<p<\infty.$
\section{Preliminaries on General Theory}
\begin{definition}
    We define $w\in W^{1,p}_0(\Om) , w>0 $ to be a weak solution of \eqref{problem} if
    \begin{equation*}
        \int_\Om |\na w|^{p-2}\na w \cdot \na \phi\ dx + \int_\Om |\na w|^{q-2}\na w \cdot \na \phi\ dx = \la \int_\Om \frac{f(w)}{w^\ba} \phi\ dx
    \end{equation*}
    holds for all $\phi \in C_c^\infty(\Om)$.
\end{definition}
\textbf{Notations:} \begin{enumerate}
    \item By $d(x)$ we denote the distance of $\pa \Om$ from the point $x$, i.e, \[d(x)=inf\{\|x-y\|:y\in \pa \Om\}\]
    \item We write $\Om_\epsilon$ for the set $\{x \in \Om: d(x)<\epsilon\}$.
\end{enumerate}
    \begin{lemma}\label{exstmin}
        Given $h\in C(\Omega)$ such that $|h(x)|\leq  \frac{C}{d(x)^{\ba}},$ there exists a unique $w\in  W^{1,p}_0(\Omega)$ solving
         \begin{align}\label{w}
             -\De_p w- \De_q w &= h \ \text{in} \ \Om \nonumber \\
             w &=0 \ \text{on} \ \pa \Om
         \end{align}
        in the weak sense.
    \end{lemma}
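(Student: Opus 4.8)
The plan is to recognize \eqref{w} as the Euler--Lagrange equation of a strictly convex, coercive functional on $W^{1,p}_0(\Om)$, so that both existence and uniqueness follow from the direct method of the calculus of variations. The only genuinely singular ingredient is the datum $h$, and the crux of the argument is to show that, despite the blow-up $\abs{h(x)}\le C\,d(x)^{-\ba}$ near $\pa\Om$, the map $\va\mapsto\int_\Om h\va$ defines a bounded linear functional on $W^{1,p}_0(\Om)$; here the hypothesis $0<\ba<1$ is decisive.

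First I would establish $h\in W^{-1,p'}(\Om)$. Since $\Om$ is bounded, $d$ is bounded above, and because $1-\ba>0$ we may write $d^{-\ba}=d^{1-\ba}\,d^{-1}\le C\,d^{-1}$. Hence for $\va\in C_c^\infty(\Om)$,
\[
\Big|\int_\Om h\va\,dx\Big|\le C\int_\Om\frac{\abs{\va}}{d}\,dx\le C\,\abs{\Om}^{1/p'}\Big(\int_\Om\frac{\abs{\va}^p}{d^p}\,dx\Big)^{1/p}\le C'\,\|\na\va\|_{L^p(\Om)},
\]
where the middle step is H\"older's inequality and the last is the Hardy inequality, valid on a smooth bounded domain for $p>1$. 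By density of $C_c^\infty(\Om)$ in $W^{1,p}_0(\Om)$ this bound extends to the whole space, so $h$ induces a bounded functional and the weak formulation in \eqref{w} makes sense against every test function in $W^{1,p}_0(\Om)$.

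With $h\in W^{-1,p'}(\Om)$ in hand, I would introduce the energy
\[
J(w)=\frac1p\int_\Om\abs{\na w}^p\,dx+\frac1q\int_\Om\abs{\na w}^q\,dx-\int_\Om hw\,dx,\qquad w\in W^{1,p}_0(\Om),
\]
which is finite since $W^{1,p}_0(\Om)\hookrightarrow W^{1,q}_0(\Om)$ as $q<p$ and $\Om$ is bounded. The functional $J$ is coercive, because $J(w)\ge \frac1p\|\na w\|_p^p-\|h\|_{*}\|\na w\|_p\to\infty$ as $\|w\|\to\infty$ (using $p>1$); it is weakly lower semicontinuous, the gradient terms being convex and continuous and the linear term weakly continuous; and it is strictly convex, as $\xi\mapsto\abs{\xi}^p$ is strictly convex and $\|\na\cdot\|_p$ is a norm on $W^{1,p}_0(\Om)$ by Poincar\'e. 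The direct method then produces a unique minimizer $w\in W^{1,p}_0(\Om)$, and setting $\frac{d}{dt}J(w+t\va)\big|_{t=0}=0$ recovers precisely the weak form of \eqref{w}.

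Finally, uniqueness of the weak solution follows from strict monotonicity of $-\De_p-\De_q$: testing the difference of two solutions $w_1,w_2$ against $w_1-w_2$ yields $\langle-\De_p w_1+\De_p w_2,\,w_1-w_2\rangle+\langle-\De_q w_1+\De_q w_2,\,w_1-w_2\rangle=0$, and since each pairing is nonnegative and vanishes only when $\na w_1=\na w_2$ a.e., the zero boundary data force $w_1=w_2$. I expect the main obstacle to be the first step, namely verifying the integrability of the singular datum against $W^{1,p}_0$ functions and correctly invoking Hardy's inequality; everything afterward is the standard monotone/variational machinery for the $(p,q)$-Laplacian.
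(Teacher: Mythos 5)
Your proposal is correct and follows essentially the same route as the paper: direct minimization of the energy $\frac1p\int|\na w|^p+\frac1q\int|\na w|^q-\int hw$ on $W^{1,p}_0(\Om)$, with H\"older plus Hardy's inequality (exploiting $0<\ba<1$ so that $d^{-\ba}\le C d^{-1}$) to control the singular datum and obtain coercivity. The only cosmetic difference is that you settle uniqueness by strict convexity and strict monotonicity of the operator, whereas the paper cites the weak comparison principle; these amount to the same standard argument.
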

    
    \begin{proof}
        We will prove the existence of the solution by the direct minimization technique. The energy functional $E: W^{1,p}_0(\Omega)\rightarrow \mathbb{R}$ corresponding to  $ -\De_p w -\Delta_q w=h(x) $ is : \\
        \begin{equation}
            E(u)= \frac{1}{p} \int_{\Om}^{} |\nabla u|^{p} dx + \frac{1}{q} \int_{\Om}^{} |\nabla u|^{q} dx - \int_{\Om}^{} hu\ dx
        \end{equation}
        Using H\"older's inequality and Hardy's inequality we get 
        $$E(u_n) \geq \frac{1}{p}\|\na u_n\|^p_{L^p(\Om)} + \frac{1}{q}
        \|\na u_n\|^q_{L^q(\Om)}- C'\|\na u_n\|_{L^p(\Om)} $$
       and hence $E$ is coercive. In a standard way, we can also show that $E$ is weakly lower semi continuous  on the space $W^{1,p}_0(\Omega).$ Thus $E$ possesses a global minimizer $w \in W_0^{1,p}(\Om)$ and by weak comparison principle this minimizer $w$ is the unique weak solution of the required problem.
    \end{proof}

    \begin{lemma}\label{uniform_bound}
        Let $h$ and $w$ be as in Lemma \ref{exstmin} and $\beta\in (0,1).$ Then there exist constants $\al \in (0,1)$ and $M>0$, depending only on $C, \ba$ and $ \Om$ such that $w \in C^{1, \al}(\overline{\Om})$  and $\|w\|_{C^{1, \al}(\overline{\Omega})}<M. $
    \end{lemma}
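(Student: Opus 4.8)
The plan is to derive the estimate in two stages: first a uniform $L^\infty$ bound for $w$ by comparison with an explicit boundary barrier, and then the $C^{1,\al}$ bound by invoking the global regularity theory of Giacomoni et al.~\cite{giacomoni2021sobolev}, throughout recording that the constants produced depend on $h$ only through $C$ and $\ba$.

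For the $L^\infty$ bound I would exploit the strict monotonicity of $-\De_p-\De_q$ (the same property that gives uniqueness in Lemma~\ref{exstmin}) to reduce matters to a single barrier. Since $|h(x)|\le C d(x)^{-\ba}$, it suffices to produce a bounded $\zeta\ge 0$ vanishing on $\pa\Om$ with $-\De_p\zeta-\De_q\zeta\ge C d(x)^{-\ba}$ weakly; the weak comparison principle then yields $-\zeta\le w\le\zeta$, since $-\De_p(-\zeta)-\De_q(-\zeta)=-(-\De_p\zeta-\De_q\zeta)$. A profile of the form $\zeta=g(d(x))$ with $g(t)=At-Bt^{2-\ba}$ does the job near $\pa\Om$: one has $g'(0)=A>0$, so $\zeta$ is comparable to $d$ and bounded, while the strictly concave correction $-Bt^{2-\ba}$ forces $g''(t)\sim -t^{-\ba}$, whence the leading term of $-\De_p(g(d))$ is a positive multiple of $d^{-\ba}$ (and likewise for the $q$-term), the curvature contributions coming from $\De d$ being bounded on $\Om_\epsilon$ by smoothness of $\pa\Om$. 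Choosing $B$ large relative to $C$ near the boundary and $A$ large enough to dominate the bounded right-hand side in the interior, and then patching with a constant away from $\pa\Om$, produces $\zeta$ with $\|\zeta\|_\infty$ controlled solely by $C,\ba,\Om$. This gives $\|w\|_\infty\le M_0(C,\ba,\Om)$.

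With the $L^\infty$ bound secured, the equation \eqref{w} has right-hand side dominated by $C d(x)^{-\ba}$ with $\ba\in(0,1)$, which is precisely the class of singular data treated in \cite{giacomoni2021sobolev}. Their global $C^{1,\al}(\overline{\Om})$ regularity theorem for the $(p,q)$ Laplacian then applies and yields $w\in C^{1,\al}(\overline{\Om})$ together with a quantitative bound on $\|w\|_{C^{1,\al}(\overline{\Om})}$ in terms of $\|w\|_\infty$, the constant $C$, the exponent $\ba$, and $\Om$. Combining this with the first stage shows that $\al$ and the bound $M$ may be taken to depend only on $C,\ba$ and $\Om$, as claimed.

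The step I expect to be the main obstacle is the barrier construction and, more importantly, the bookkeeping that makes every constant independent of the particular $h$: because the operator is non-homogeneous, the $p$- and $q$-contributions to $-\De_p(g(d))-\De_q(g(d))$ scale differently in $A$, so one cannot calibrate them by a single rescaling and must instead balance the two singular terms directly against $C d^{-\ba}$. Once the uniform $L^\infty$ bound is in place, the upgrade to $C^{1,\al}$ is a direct appeal to \cite{giacomoni2021sobolev}, the only remaining subtlety being to confirm that the regularity constants there enter through $\|w\|_\infty$ and the structural quantities $C,\ba$ alone.
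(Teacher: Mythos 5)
Your two-stage plan (first a uniform $L^\infty$ bound, then a citation for global $C^{1,\al}$ regularity) is reasonable in outline, but both stages have problems, and the second one hides exactly the difficulty the lemma is meant to resolve. In Stage 1 the patching step fails as described: a constant $K$ satisfies $-\De_p K-\De_q K=0$, which is \emph{not} $\geq Cd(x)^{-\ba}$, so on the interior region where your barrier equals the constant the supersolution inequality is violated and the weak comparison principle cannot be invoked on all of $\Om$. Some patching is unavoidable, since $d$ is smooth only in a collar $\Om_\epsilon$ of the boundary, so $g(d)$ alone is not an admissible global barrier; note also that for this nonlinear operator you cannot repair things by adding supersolutions, since sums of supersolutions are in general not supersolutions. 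The flaw is fixable (e.g.\ build the barrier from the first Dirichlet eigenfunction $\phi_1$ of $-\De$, as in Lemma \ref{lemma_sub}), but the paper avoids the construction altogether: it lets $\tilde w$ solve $-\De_p\tilde w-\De_q\tilde w=|h|$, gets $\pm w\leq\tilde w$ by weak comparison, and quotes Proposition 2.7 of \cite{giacomoni2021sobolev} for $\tilde w\leq C_1 d$. That route yields the stronger bound $|w|\leq C_1 d(x)$, not merely $\|w\|_\infty\leq M_0$.

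The more serious issue is Stage 2. You treat the global $C^{1,\al}$ estimate for $-\De_p w-\De_q w=h$ with sign-changing $h$, $|h|\leq Cd^{-\ba}$, as a black box from \cite{giacomoni2021sobolev}. The regularity theory in that reference is developed for positive solutions of singular problems with nonnegative, structured right-hand sides, whereas here $w$ and $h$ may change sign — this is the semipositone setting, where negativity near the boundary is the whole point — and it is not evident that any statement there applies verbatim with constants depending only on $C,\ba,\Om$ (and $\|w\|_\infty$). The paper's proof indicates that its authors did not rely on such a statement: the key idea, which your proposal is missing, is to introduce the auxiliary linear problem $-\De u=h$, $u=0$ on $\pa\Om$, whose solution is in $C^{1,\al}(\overline{\Om})$ with uniform bounds by Lemma 3.1 of \cite{hai2011class}, and then to observe that $w$ solves the homogeneous divergence-form equation $\mathrm{div}\bigl(|\na w|^{p-2}\na w+|\na w|^{q-2}\na w-\na u\bigr)=0$ in $\Om$, $w=0$ on $\pa\Om$. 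Here the singular data enter only through the H\"older-continuous field $\na u$, so Lieberman's boundary regularity theorem (Theorem 1.7 of \cite{lieberman1991natural}) applies and gives $w\in C^{1,\al}(\overline{\Om})$ with the claimed uniform bound. Unless you can point to a precise result in \cite{giacomoni2021sobolev} covering sign-changing data of this type, your argument delegates the crux of the lemma to a citation that does not obviously support it; the reduction to Lieberman's theorem via the linear auxiliary problem is the missing step.
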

    
    \begin{proof}
        Let $u$ be the solution of
        \begin{align}\label{u}
            -\De u &=h \ \text{in} \ \Om \nonumber\\
            u &=0 \ \text{on} \ \pa\Om
        \end{align}
         and by lemma 3.1 from \cite{hai2011class}, there exist constants $\al \in (0,1)$ such that $u \in C^{1, \al}(\overline{\Om}).$\\ 
         Also, let $\tilde{w}$ be the solution of
         \begin{eqnarray}
              -\De_p \tilde{w}- \De_q \tilde{w} = |h| \ \text{in} \ \Om \nonumber \\
            \tilde{w}=0 \ \text{on} \ \pa \Om \nonumber
         \end{eqnarray}
         Then, by weak comparison principle and proposition 2.7 from \cite{giacomoni2021sobolev}, there exists a constant $C_1$ such that $w \leq \tilde{w} \leq C_1 d(x) $ in  $\Om$. Again
         \begin{equation*}
             -\De_p(-w) -\De_q(-w)= -h \leq |h|= -\De_p \tilde{w}- \De_q \tilde{w} \ \text{in} \ \Om
         \end{equation*}
         and $-w=\tilde{w}=0 \ \text{on} \ \pa\Om .$  By a similar reasoning $-w \leq \tilde{w} \leq C_1 d(x) $ in $ \Om$ and therefore, we have,
         \begin{equation}
             |w| \leq C_1 d(x) \mbox{ in }  \ \Om
         \end{equation}
         Then from \eqref{w} and \eqref{u}, we have,
         \begin{equation}
             div \bigl( |\na w|^{p-2} \na w + |\na w|^{q-2} \na w - \na u \bigr)=0 \mbox{ in } \ \Om
         \end{equation}
         and $w=0 \ on \ \pa\Om.$   Now we conclude the proof by applying Lieberman's boundary regularity result given in  Theorem 1.7 of \cite{lieberman1991natural}.
   \end{proof} 

    In this section, from now on, we assume that \eqref{problem} admits a positive subsolution $\psi \in C^{1}_0(\overline{\rm\Om}).$ Also we assume that  $\psi(x) \ge c_0 d(x)$ for some positive constant $c_0$  and $\|\psi\|_\infty > \beta_0 $ where $\beta_0$ is given in Theorem \ref{thm1}. We define the set 
    \begin{center}
       $\Aa = \{u \in C^{1}_0(\overline{\Om}) : u(x) \ge \psi(x)\}$
    \end{center} 
Fix a $\la>0$ and let $u \in \Aa $ and let $g(u) := \la \frac{f(u)}{u^\beta}$. Then  $|g(u)| \leq \frac{c_1}{d(x)^\beta}$ for some $c_1>0$.  Let $w$ be the unique weak solution of $-\De_p w -\De_q w= g(u)$ in $\Om$,  $w=0$  on  $\pa \Om$ which is now guaranteed to exist by Lemma \ref{exstmin}. Since g is monotonically increasing and $u \ge \psi$, we have 
     \begin{center}
         $-\De_p w -\De_q w= g(u) \ge g(\psi) \ge -\De_p \psi -\De_q \psi$ in $\Om$ and   $ w = \psi = 0 $ on $\pa \Om$
    \end{center}
    Now by weak comparison principle, we have $w \ge \psi$. Also, by Lemma \ref{uniform_bound}, $w \in C^{1,\al}(\overline{\Om})$, form some $\al \in (0,1)$. Hence, we define the map $T_g$ as given below.

    \begin{definition}
        We define the map $T_g : \Aa \rightarrow \Aa $ as $T_g(u)=w$ iff $w$ is the weak solution of
        \begin{align}
            -\De_p w -\De_q w &= g(u) \ \text{in} \ \Om \nonumber\\
            w &=0 \ \text{on} \ \pa\Om
        \end{align}
        Clearly, $T_g$ is a well-defined map from $\Aa$ to itself. Next, we show that the map $T_g: \Aa \rightarrow \ci $ is completely continuous.
    \end{definition}

    \begin{lemma}\label{comp_cont}
        $T_g : \Aa \rightarrow \ci $ is completely continuous.
    \end{lemma}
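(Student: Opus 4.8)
The plan is to verify the two constituents of complete continuity separately: that $T_g$ sends bounded subsets of $\Aa$ into relatively compact subsets of $\ci$, and that $T_g$ is continuous. For the compactness, I would fix a bounded set $\mathcal{B}\subset\Aa$, say $\|u\|_{\ci}\le R$ for all $u\in\mathcal{B}$. For such $u$ one has the two-sided control $c_0 d(x)\le\psi(x)\le u(x)\le R$; since $f$ is continuous on $[0,R]$ by (H1), $|f(u)|\le C_R$, while $u^\ba\ge (c_0 d(x))^\ba$. Hence $|g(u)|=\la|f(u)|/u^\ba\le C/d(x)^\ba$ with $C=C(R,\la,c_0,\ba)$ \emph{independent} of $u\in\mathcal{B}$. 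Feeding this uniform bound into Lemma \ref{uniform_bound}, every $w=T_g(u)$ with $u\in\mathcal{B}$ obeys $\|w\|_{C^{1,\al}(\overline{\Om})}\le M$ for a single $\al\in(0,1)$ and a single $M$. Since the embedding $C^{1,\al}(\overline{\Om})\hookrightarrow\ci$ is compact by Arzelà--Ascoli, $T_g(\mathcal{B})$ is relatively compact in $\ci$, the vanishing boundary values being preserved under uniform convergence.

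For continuity I would argue sequentially. Let $u_n\to u$ in $\ci$ with $u_n,u\in\Aa$, and set $w_n=T_g(u_n)$, $w=T_g(u)$. As $\{u_n\}$ is bounded, the compactness step makes $\{w_n\}$ relatively compact in $\ci$, so it suffices to show every $\ci$-convergent subsequence $w_{n_k}\to\bar w$ satisfies $\bar w=w$. To identify $\bar w$ I would pass to the limit in the weak formulation $\int_\Om(|\na w_{n_k}|^{p-2}\na w_{n_k}+|\na w_{n_k}|^{q-2}\na w_{n_k})\cdot\na\va\,dx=\la\int_\Om \frac{f(u_{n_k})}{u_{n_k}^\ba}\va\,dx$ for each $\va\in C_c^\infty(\Om)$. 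On the left $\na w_{n_k}\to\na\bar w$ uniformly, so by continuity of $\xi\mapsto|\xi|^{s-2}\xi$ the two flux integrals converge to the corresponding terms for $\bar w$. On the right, on the compact set $\mathrm{supp}\,\va$ we have $d(x)\ge\de>0$, so $u_{n_k}\to u$ uniformly there while staying bounded away from $0$; combined with the domination $|g(u_{n_k})|\le C/\de^\ba$ on $\mathrm{supp}\,\va$, dominated convergence gives convergence of the right-hand side to $\la\int_\Om \frac{f(u)}{u^\ba}\va\,dx$. Thus $\bar w\in\ci\subset W^{1,p}_0(\Om)$ is a weak solution of $\dpq\bar w=g(u)$ vanishing on $\pa\Om$, and the uniqueness in Lemma \ref{exstmin} forces $\bar w=w=T_g(u)$. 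As every subsequence admits a further subsequence converging to the same $w$, the whole sequence satisfies $w_n\to w$, giving continuity.

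The delicate point I expect to be the main obstacle is passing to the limit in the singular reaction term $g(u_n)$ near $\pa\Om$, where $f(u)/u^\ba$ blows up. Working with test functions $\va\in C_c^\infty(\Om)$ confines the analysis to the interior, where $d(x)\ge\de$ keeps every $u_{n_k}$ uniformly bounded below and permits the clean dominated-convergence argument above; this is what sidesteps the singularity. It is worth noting that the non-homogeneity of the operator and the hypothesis $q>2$ play no role in this lemma, since we only need continuity of the flux maps $\xi\mapsto|\xi|^{s-2}\xi$ rather than any quantitative monotonicity estimate, and the uniform $C^{1,\al}$ estimate from Lemma \ref{uniform_bound} supplies exactly the compactness that drives the subsequence-plus-uniqueness scheme.
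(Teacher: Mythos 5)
Your proposal is correct and follows essentially the same route as the paper: a uniform $C^{1,\al}(\overline{\Om})$ bound from Lemma \ref{uniform_bound} (using $|g(u)|\le C/d(x)^\ba$ on bounded subsets of $\Aa$), Arzel\`a--Ascoli compactness, passage to the limit in the weak formulation, identification of the limit by the uniqueness in Lemma \ref{exstmin}, and the subsequence argument to get convergence of the full sequence. The only cosmetic differences are that you separate compactness and continuity explicitly, and you dodge the boundary singularity in the reaction term by testing against $\va\in C_c^\infty(\Om)$, whereas the paper tests with $W^{1,p}_0(\Om)$ functions and relies (implicitly, via $\ba<1$ and Hardy's inequality) on dominated convergence.
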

    
    \begin{proof}
        Fix $u \in \Aa ,$ then $u(x) \ge c_0 d(x)$ and $\|u\|_{\infty}>\beta_0.$ Let $h\in \ci$ such that $\|h\|_{C_0^{1}(\Bar{\Om})}<\delta.$  Choose $\delta$ small enough so that $u(x)+h(x) \ge \frac{c_0}{2} d(x)$ and $\|u+h\|_\infty > \beta_0$. Let $w_h$ be the unique solution of 
 \begin{align}
      -\De_p w_h -\De_q w_h &= g(h+u)\ \text{in}\ \Om \nonumber \\ 
     w_h &= 0\ \text{on}\ \pa \Om
 \end{align}
 and $w$ be the unique solution of 
 \begin{align}\label{eqngu}
     -\De_p w -\De_q w &= g(u)\ \text{in}\ \Om \nonumber \\
     w &= 0\ \text{on}\ \pa \Om  
 \end{align}
       Note that $g(u + h) \rightarrow g(u)$ pointwise as $\|h\|_{C_0^{1}(\Bar{\Om})} \rightarrow  0$. As
 $|g(u+h)| \le\ C_1 d(x)^{- \beta}$, by Lemma \ref{uniform_bound} there exists an $\alpha \in (0,1)$ such that $\|w_h\|_{C^{1,\alpha}(\Bar{\Om})} < M $. Now by Ascoli Arzela, theorem upto a subsequence, $w_{h_k}\rightarrow \tilde{w}$ in $C^{1,\alpha-\epsilon}(\overline{\Omega}).$ Also for all $\phi \in W_0^{1,p}(\Om),$
\begin{equation}\label{hkeqn}
      \int_{\Om}^{} |\nabla{w_{h_{k}}}|^{p-2} \nabla{w_{h_{k}}} \cdot \nabla{\phi}\ dx + \int_{\Om}^{} |\nabla{w_{h_{k}}}|^{q-2} \nabla{w_{h_{k}}} \cdot \nabla{\phi}\ dx = \int_{\Om}^{} g(u+h_k)\ \phi\ \text{dx} 
  \end{equation}
  Passing through the limit in the equation \eqref{hkeqn}, we get 
   \begin{equation}
      \int_{\Om}^{} |\nabla{\tilde{w}}|^{p-2} \nabla{\tilde{w}} \cdot \nabla{\phi}\ dx + \int_{\Om}^{} |\nabla{\tilde{w}}|^{q-2} \nabla{\tilde{w}} \cdot \nabla{\phi}\ dx = \int_{\Om}^{} g(u)\phi\ \text{dx}
  \end{equation}
  Thus $\tilde{w}$ is a weak solution of \eqref{eqngu} and by uniqueness $w=\tilde{w}$. In a standard way, we can show that every subsequence of the original sequence $\{w_h\}$ converges to $w$ and hence $T_g: \Aa \rightarrow C^{1,\alpha-\epsilon}_0(\overline{\Omega})$ is continuous.  
  In other words, $T_g(u+h) \rightarrow T_g(u)$ in  $C^{1,\alpha-\epsilon}(\overline{\Omega})$ as $\|h\|_{C_0^{1}(\Bar{\Om})} \rightarrow  0$. Using the fact that $C^{1,\alpha-\epsilon}_0(\Bar{\Om}) \subset \subset C^{1}_0(\Bar{\Om}),$ we conclude that $T_g: \Aa \rightarrow {C_0^{1}(\Bar{\Om})}$ is completely continuous. 
    \end{proof}
\textbf{Proof of Theorem \ref{thm1}:} Since $g$ is monotonically increasing, we know that $T_g$ maps $X$ into itself where $X=\{u\in C^1_0(\overline{\Omega}): \psi\leq u \leq \phi\}.$ Also, the map $T_g: X \rightarrow X $ is completely continuous. Now our result  follows due to the celebrated Theorem 6.1 of Amann\cite{amann1976fixed}.\hfill\qed.
    \section{Sub-Super solution construction}
    
    In this section, we consider $2<q<p<\infty$ and focus on constructing an ordered pair sub and supersolutions for the problem \eqref{problem} belonging to the set $\Aa.$ Here we assume $f$  satisfies all the hypotheses $(H1)-(H4)$ mentioned in the introduction.
    \begin{lemma}\label{lemma_sub}
     Assume $f$ satisfies (H1)-(H3). Let $\phi_1$ be the first eigen function of $-\De$ with zero Dirichlet boundary condition. Define \[\psi := \la^r (\phi_1 +\phi_1^\xp)\] where $\frac{1}{p-1+\ba}< r< \frac{1}{p-1+\ba-\si}$. Then, for large $\la$, $\psi \in \ci$ is a subsolution for the problem \eqref{problem} satisfying $\psi(x) \geq c_\lambda d(x).$ 
    \end{lemma}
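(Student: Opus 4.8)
The plan is to verify the differential inequality $-\De_p\psi-\De_q\psi\le \la f(\psi)/\psi^\ba$ pointwise a.e.\ and then pass to the weak formulation, after first recording regularity and the lower bound. Since $m:=\xp\in(1,2)$ when $\ba\in(0,1)$, the gradient $m\phi_1^{m-1}\na\phi_1$ of $\phi_1^m$ extends continuously (to $0$) up to $\pa\Om$, so $\phi_1^m\in C^1(\overline{\Om})$ and $\psi=\la^r(\phi_1+\phi_1^m)\in\ci$; moreover $\psi\ge \la^r\phi_1\ge c_\la\, d(x)$ by the Hopf estimate $\phi_1\ge c\,d(x)$, and $\|\psi\|_\infty=\la^r\|\phi_1+\phi_1^m\|_\infty>\ba_0$ once $\la$ is large. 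Writing $v=\phi_1+\phi_1^m$ and using homogeneity, $-\De_s\psi=\la^{r(s-1)}(-\De_s v)$ for $s\in\{p,q\}$, and a direct computation gives
\begin{equation*}
-\De_s v = -(s-1)m(m-1)(1+m\phi_1^{m-1})^{s-2}\phi_1^{m-2}|\na\phi_1|^s - (1+m\phi_1^{m-1})^{s-1}\De_s\phi_1 .
\end{equation*}
The hypothesis $q>2$ is exactly what makes $\De_q v$ a bounded continuous function up to the interior critical points of $\phi_1$ (where $\na\phi_1=0$), since there the factors $|\na\phi_1|^{q-2}$ vanish rather than blow up; this is where the non-homogeneity is tamed.

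Next I fix a small $\de>0$, independent of $\la$, and work on the strip $\Om_\de=\{d(x)<\de\}$. There the Hopf bound gives $|\na\phi_1|\ge c_0>0$, and since $m-2=-\tfrac{2\ba}{1+\ba}<0$ the singular first term above, of size $\phi_1^{m-2}\to\infty$, dominates the bounded regular term (worst case at the inner edge $d=\de$, controlled by taking $\de$ small); hence $-\De_p v<0$ and $-\De_q v<0$, so $-\De_p\psi-\De_q\psi<0$ throughout $\Om_\de$. Consequently, wherever $f(\psi)\ge 0$ in $\Om_\de$ the subsolution inequality is immediate. It remains to treat the thin layer $\{x\in\Om_\de:f(\psi(x))<0\}$, i.e.\ $\{\psi<\ba_0\}$: there $|f(\psi)|$ is bounded, so $|\la f(\psi)/\psi^\ba|\le C\la^{1-r\ba}\phi_1^{-\ba}$, while $-\De_p\psi-\De_q\psi\le -C'\la^{r(p-1)}\phi_1^{m-2}$. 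Since
\[
m-2+\ba=-\frac{\ba(1-\ba)}{1+\ba}<0 \quad\text{and}\quad r>\frac{1}{p-1+\ba}\ \Rightarrow\ r(p-1+\ba)-1>0,
\]
the ratio of the two sides is $\ge c\,\la^{\,r(p-1+\ba)-1}\,\phi_1^{-\ba(1-\ba)/(1+\ba)}\to\infty$, and the desired inequality holds on this layer for $\la$ large.

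On the complementary region $\Om\setminus\Om_\de$ one has $\phi_1\ge c_\de>0$, whence $\psi\ge \la^r c_\de\to\infty$; for $\la$ large $\psi$ exceeds the threshold in (H3), giving $\la f(\psi)/\psi^\ba\ge A c''\la^{1+r(\si-\ba)}$, where $c''>0$ bounds $v^{\si-\ba}$ below. Here $v$ and its derivatives are bounded with $\De_q v$ controlled thanks to $q>2$, so $-\De_p\psi-\De_q\psi\le C\la^{r(p-1)}$. Because $r<\tfrac{1}{p-1+\ba-\si}$ forces $r(p-1)<1+r(\si-\ba)$, the right-hand side wins for large $\la$. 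Note the two constraints on $r$ are simultaneously satisfiable precisely because $\si>0$ makes $\frac{1}{p-1+\ba}<\frac{1}{p-1+\ba-\si}$, the denominators being positive since $\si<\ba+1\le p-1+\ba$. Patching the regions yields the pointwise a.e.\ inequality; since $\psi\in\ci$ makes $|\na\psi|^{p-2}\na\psi$ and $|\na\psi|^{q-2}\na\psi$ continuous with $L^1$-divergences equal to the classical ones, a standard integration-by-parts argument against nonnegative $\va\in C_c^\infty(\Om)$ upgrades it to the weak subsolution inequality.

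The main obstacle I anticipate is the reaction-negative layer $\{f(\psi)<0\}$: unlike a fixed-$\de$ splitting, this set shrinks as $\la\to\infty$, so the correct strategy is to establish that the left-hand side is negative on \emph{all} of $\Om_\de$ first and to run the quantitative singular comparison only where the reaction term is negative. Closing the exponent bookkeeping in that comparison is exactly what pins down the admissible range of $r$, and verifying that $\De_q v$ stays bounded at the interior critical points of $\phi_1$ is the step where $q>2$ is essential.
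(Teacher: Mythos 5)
Your proposal is correct and follows essentially the same route as the paper's proof: the same decomposition into a thin boundary strip $\Om_\de$ (where, thanks to Hopf's lemma and $q>2$, the negative singular term $\phi_1^{\frac{2}{1+\ba}-2}|\na\phi_1|^s$ dominates the bounded term involving $\De_s\phi_1$) and the interior $\Om\setminus\Om_\de$ (where (H3) together with $r<\frac{1}{p-1+\ba-\si}$ closes the estimate), with identical exponent bookkeeping. The only cosmetic difference is inside $\Om_\de$: the paper treats both signs of $f(\psi)$ in one chain, $-\De_p\psi-\De_q\psi\leq -\frac{m\,\la^{r(p-1+\ba)}}{\psi^\ba}\leq \frac{\la f(0)}{\psi^\ba}\leq \frac{\la f(\psi)}{\psi^\ba}$ using monotonicity of $f$, whereas you split $\Om_\de$ into $\{f(\psi)\geq 0\}$ and $\{f(\psi)<0\}$ and run the quantitative comparison only on the latter — but the underlying estimate and the key condition $r(p-1+\ba)>1$ are the same.
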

    
    \begin{proof}
    From the definition, since $\beta<1$ it is clear that $\psi \in \ci$ and $\psi(x) \geq c_\la d(x)$. We only need to show that $\psi$ is a subsolution for (\ref{problem}) when $\la$ is large. We have, for $s=p,q$, 
    \begin{equation*}
        \begin{split}
            \De_s \bigl(\phi_1 + \phi_1^\xp \bigr) & = div \Bigl[\Bigl(1+\xp\phi_1^{\xp-1}\Bigr)^{s-1}|\na\phi_1|^{s-2} \na\phi_1 \Bigr]\\
            & =\Bigl(1+\xp\phi_1^{\xp-1}\Bigr)^{s-1} div \Bigl(|\na\phi_1|^{s-2} \na\phi_1 \Bigr) \\
            & + \xp\Bigl(\xp-1 \Bigr)\Bigl(s-1\Bigr)|\na\phi_1|^s \Bigl(1+\xp\phi_1^{\xp-1}\Bigr)^{s-2} \phi_1^{\xp-2}\\
            & = \Bigl(1+\xp\phi_1^{\xp-1}\Bigr)^{s-1} [\De_s \phi_1] \\
            & + \xp\Bigl(\xp-1\Bigr)\Bigl(s-1\Bigr)|\na\phi_1|^s \Bigl(1+\xp\phi_1^{\xp-1}\Bigr)^{s-2} \phi_1^{\xp-2}\\
        \end{split}
    \end{equation*}
Thus we write, 
    \begin{equation}\label{deltas}
         -\De_s \psi=\la^{r(s-1)}\phi_1^{\frac{-2\beta}{1+\beta}} L_s(\phi_1)
    \end{equation}
    where 
\begin{multline} \nonumber 
L_s(\phi_1)= \Bigl(1+\xp\phi_1^{\frac{1-\beta}{1+\beta}}\Bigr)^{s-1}\phi_1^{\frac{2\beta}{1+\beta}}[-\De_s \phi_1]\\
        - \xp\Bigl(\frac{1-\beta}{1+\beta}\Bigr)\Bigl(s-1\Bigr)|\na\phi_1|^s \Bigl(1+\xp\phi_1^{\frac{1-\beta}{1+\beta}}\Bigr)^{s-2}
        \end{multline}
    Now as $s>2,$ we have $\De_s \phi_1 \in L^\infty(\Om)$  and by Vasquez (\cite{vazquez1984strong}), $|\na\phi_1|>0$ on $\pa\Om.$ Thus there exists $\de>0$ and $m>0$ such that for in $x\in \Om_{\de}$,
    \begin{equation}\label{lphineg}
        L_p(\phi_1)<-m \;\;\; \text{ and }\;\;\;\; L_q(\phi_1)<0
    \end{equation}
    Using (\ref{deltas}) and $(\ref{lphineg})$ we have 
    \begin{equation}
        \displaystyle -\Delta_p \psi-\Delta_q \psi \leq -\frac{ m\, \la^{r(p-1)}}{\phi_1^{\frac{2\beta}{1+\beta}}}\leq -\frac{m \, \la^{r(p-1+\ba)}}{\psi^\ba} \;\; \mbox{ for all } x\in\Om_\delta.
    \end{equation}
    The last inequality is obtained directly from the definition of $\psi.$ Now that $f(0)$ is negative and f is monotone increasing, so for $r>\frac{1}{p-1+\ba}$ and $\la>>1,$ we have, in $\Om_{\de}$,
    \begin{equation*}
      -\frac{m \, \la^{r(p-1+\ba)}}{\psi^\ba}  \leq \  \frac{\la f(0)}{\psi^{\ba}} \leq \ \frac{\la f(\psi)}{\psi^{\ba}}
    \end{equation*}
   Hence,
    \begin{equation}\label{sub1}
        -\De_p\psi- \De_q\psi \leq \frac{\la f(\psi)}{\psi^{\ba}} \ \text{in} \ \Om_\de
    \end{equation}
    In $\Om \setminus \Om_\de$, there exists a positive constant $\mu$ such that $\phi_1>\mu$ and thus $\psi>\la^r {\mu}$. Also using the monotonicity of $f$ and the assumption (H3) on $f$, we have for $\la>>1$,
    \begin{equation*}
        f(\psi)> \ f(\la^r {\mu}) \geq \ A\la^{r\si}{\mu}^\si
    \end{equation*}
    Further, without loss of generality, we assume $\|\phi_1\|_\infty=1$. Therefore, in $\Om \setminus \Om_\de$, we have, for $s=p,q$,
    \begin{equation}
        -\De_s\psi \leq \frac{\la^{r(s-1)}c_s}{\overline{\mu}} \leq \frac{\la[A\la^{r\si}{\mu}^\si]}{2\psi^\ba} \leq \frac{\la f(\psi)}{2\psi^\ba}
    \end{equation}
    whenever $r<\frac{1}{s-1+\ba-\si}$ and $c_s=(1+\xp)^{s-1} \|\De_s \phi_1\|_\infty$ and $\overline{\mu}=\mu^{\frac{2\ba}{1+\ba}}$.
    Since $q<p$, by taking $r<\frac{1}{p-1+\ba-\si}$ we have,
    \begin{equation}\label{sub2}
        -\De_p\psi- \De_q\psi \leq \frac{\la f(\psi)}{\psi^{\ba}}, \ \text{in} \ \Om \setminus \Om_\de
    \end{equation}
    Now from (\ref{sub1}) and (\ref{sub2}) we have the required result.

    \end{proof}
    \begin{lemma}\label{superlemma}
        Assume $f$ satisfies (H1), (H2) and (H4).Then the problem \eqref{problem} admits a super-solution for all $\la>0.$
    \end{lemma}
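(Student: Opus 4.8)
The plan is to exploit the sublinear growth encoded in (H4). Since $f(s)\le Bs^\ga$ for $s\ge 0$, dividing by $s^\ba$ gives $\frac{f(s)}{s^\ba}\le Bs^{\ga-\ba}$ for every $s>0$, and by (H4) the exponent $\kappa:=\ga-\ba$ lies in $[0,1)$. Hence it suffices to produce a function $\phi$ satisfying
\[
-\De_p\phi-\De_q\phi \ge \la B\,\phi^{\kappa}\ \text{in }\Om,\qquad \phi=0\ \text{on }\pa\Om,
\]
because then $-\De_p\phi-\De_q\phi\ge \la B\phi^{\ga-\ba}\ge \la\frac{f(\phi)}{\phi^\ba}$, so $\phi$ is a supersolution of \eqref{problem}. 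I would obtain such a $\phi$ with equality, i.e.\ as a positive solution of the purely sublinear problem $-\De_p\phi-\De_q\phi=\la B\phi^{\kappa}$.

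To solve this sublinear problem I would use direct minimization of the energy
\[
J(v)=\frac1p\|\na v\|_{L^p(\Om)}^p+\frac1q\|\na v\|_{L^q(\Om)}^q-\frac{\la B}{\kappa+1}\int_\Om (v^+)^{\kappa+1}\,dx
\]
on $W^{1,p}_0(\Om)$. Since $\kappa+1<2<q<p$, the subtracted term is of strictly lower order than $\|\na v\|_{L^p}^p$ (after Sobolev–Poincar\'e), so $J$ is coercive; it is also weakly lower semicontinuous, the last term being weakly continuous by compact embedding. Thus a global minimizer $\phi$ exists, and since $J(|v|)\le J(v)$ I may take $\phi\ge 0$. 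The minimizer is a weak solution of $-\De_p\phi-\De_q\phi=\la B\,\phi^{\kappa}$.

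The minimizer is nontrivial: along $t\phi_0$ for a fixed $\phi_0>0$ one has $J(t\phi_0)=\frac{t^p}{p}\|\na\phi_0\|_p^p+\frac{t^q}{q}\|\na\phi_0\|_q^q-\frac{\la B\,t^{\kappa+1}}{\kappa+1}\int_\Om\phi_0^{\kappa+1}$, and because $\kappa+1<q<p$ the negative term dominates as $t\to0^+$, forcing $\inf J<0=J(0)$, so $\phi\not\equiv0$. Since $-\De_p\phi-\De_q\phi=\la B\phi^{\kappa}\ge0$, the strong maximum principle and Hopf-type lemma of \cite{vazquez1984strong} give $\phi>0$ in $\Om$ with $\frac{\pa\phi}{\pa\nu}<0$, hence $\phi\ge c\,d(x)$ near $\pa\Om$; the global regularity (Lemma \ref{uniform_bound} and the estimates of \cite{giacomoni2021sobolev}) yields $\phi\in\ci$. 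This lower bound gives $\big|\frac{f(\phi)}{\phi^\ba}\big|\le C\,d(x)^{-\ba}$ with $\ba<1$, so the singular term is integrable against test functions and the supersolution inequality $-\De_p\phi-\De_q\phi=\la B\phi^{\kappa}\ge \la\frac{f(\phi)}{\phi^\ba}$ holds weakly for every $\la>0$.

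The one step requiring care — and the main obstacle — is ruling out the trivial minimizer while retaining coercivity: coercivity needs $\kappa+1<p$, whereas nontriviality needs $\kappa+1<q$. Both are secured simultaneously by the placement $\kappa+1=\ga-\ba+1<2<q<p$, which is exactly where the hypothesis $\ga<\ba+1$ of (H4) and the standing assumption $q>2$ enter. Once nontriviality is in hand, the strong maximum principle upgrades the nonnegative minimizer to a strictly positive supersolution with the correct boundary decay, completing the argument.
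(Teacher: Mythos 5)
Your construction is workable but genuinely different from the paper's, and it is considerably heavier machinery for the same statement. The paper's proof is entirely explicit: it takes $e$ to be the $p$-torsion function of a large ball $B_R(0)\supset\overline{\Om}$, i.e. $-\De_p e=1$ in $B_R(0)$, $e=0$ on $\pa B_R(0)$, which is known in closed form as $e(x)=R^{p'}\eta(r)$; a one-line radial computation gives $-\De_q e\ge 0$, so $-\De_p(me)-\De_q(me)\ge m^{p-1}$, and since (H4) forces $\ga<\ba+1<\ba+p-1$ one can pick $m=m(\la)$ with $m^{p-1+\ba-\ga}\geq \la B\|e\|_\infty^{\ga-\ba}$, making $\phi=m(\la)e$ a supersolution with no minimization, no maximum principle and no regularity theory. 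That $\phi$ is strictly positive on $\overline{\Om}$ and can be scaled up at will, which is precisely what the paper exploits in the proof of Theorem \ref{thm2} to arrange $\psi\le\phi$. Your supersolution, the solution of the pure-power problem $-\De_p\phi-\De_q\phi=\la B\phi^{\kappa}$, solves an equation rather than an inequality and so cannot be scaled; downstream you would have to order it above the subsolution of Lemma \ref{lemma_sub} via Lemma \ref{comparison} (this does work, since $\psi$ is also a subsolution of your power problem and $\kappa<1<q-1$; it is in fact exactly the strategy of Section 4). In exchange, your $\phi$ vanishes on $\pa\Om$ and lies in $C^1_0(\overline{\Om})$, which matches the literal hypotheses of Theorem \ref{thm1} better than the paper's own supersolution does.

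Three steps in your argument need shoring up before it is complete. First, before invoking Lemma \ref{uniform_bound} or \cite{giacomoni2021sobolev} you must show $\phi\in L^\infty(\Om)$: the minimizer is a priori only in $W^{1,p}_0(\Om)$, so the right-hand side $\la B\phi^{\kappa}$ is not yet known to satisfy a bound of the form $|h|\le C d(x)^{-\ba}$. This requires a De Giorgi--Moser iteration, exactly as the paper carries out in Lemma \ref{l_infinity} for the same type of equation. Second, the strong maximum principle and Hopf lemma you quote from \cite{vazquez1984strong} are stated for the $p$-Laplacian; for $-\De_p-\De_q$ you need the quasilinear (Pucci--Serrin type) version, which does apply here but must be cited as such. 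This step is not cosmetic in your route: without strict interior positivity the quotient $f(\phi)/\phi^{\ba}$ is undefined, and it is the Hopf bound $\phi\ge c\,d(x)$ that yields $|f(\phi)/\phi^{\ba}|\le Cd(x)^{-\ba}$, the estimate the paper's framework (the map $T_g$ and Theorem \ref{thm1}) runs on. Third, (H4) permits $\ga=\ba$, i.e. $\kappa=0$, in which case $v\mapsto\int_\Om (v^+)^{\kappa+1}dx$ is not differentiable where $v$ changes sign; this edge case is harmless (solve $-\De_p\phi-\De_q\phi=\la B$ directly via Lemma \ref{exstmin}) but should be treated separately rather than absorbed into the general variational argument.
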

    
    \begin{proof}
    Let $R>0$ be such that $\overline{\Om} \subset B_R(0)$ where $B_R(0)$ is the open ball of radius $R$ around origin and $e$ be the unique solution of 
        \begin{align}\label{e}
            -\De_p e &=1 \ in \ B_R(0) \nonumber \\
            e &=0 \ on \ \pa B_R(0)
        \end{align}
        We know that $e(x)$ must be a radial function and  $e(x)= R^{\frac{p}{p-1}}\,\eta(r)$ where 
        \begin{equation}
           \eta(r)= \frac{{1-\bigl(\frac{r}{R}\bigr)^{p'}}}{p'}.
        \end{equation}  
        Using the hypothesis $(H4)$ we can choose a constant $m(\la)>>1$ such that 
        \[m(\la)^{p-1+\ba-\ga} \geq \la Be^{\ga-\ba} \]
        We claim that the function $\phi:= m(\la)e$ is a supersolution for \eqref{problem}.\\
    Again using (H4),
        \begin{equation}\label{psuper}
            -\De_p \phi=m(\la)^{p-1} \geq \frac{B\la \bigl(m(\la)e\bigr)^\ga}{\bigl(m(\la)e\bigr)^\ba} \geq \frac{\la f(\phi)}{\phi^\ba}, \ \text{in} \ \Om
        \end{equation}
Also,   \begin{equation}\label{ineq}
            -\De_q \eta=-\bigl(|\eta'(r)|^{q-2} \eta'(r)\bigr)' = \Bigl(\frac{r^{(p'-1)(q-1)}}{R^{p'(q-1)}}\Bigr)' \geq 0 \ \text{in} \ B_R(0),
        \end{equation}
    and $-\De_q \phi = -k \De_q \eta$ in $\Om$ for some $k>0.$ Thus, from \eqref{psuper} and \eqref{ineq} we have
        \begin{equation*}
            -\De_p \phi - \De_q \phi \geq -\De_p \phi \geq \frac{\la f(\phi)}{\phi^\ba} \ \text{in} \ \Om\;\;\;\; \phi\geq 0 \text{  on } \pa\Omega. 
        \end{equation*}
       Hence, $\phi$ is a super-solution for \eqref{problem}.
    \end{proof}
\noi \textbf{Proof of Theorem \ref{thm2}:}    We note that under the hypotheses $(H1) - (H4)$ using Lemma \ref{lemma_sub} and Lemma \ref{superlemma}, the problem (\ref{problem}) admits a pair of sub-supersolutions for all $\lambda\geq \la_0.$ 
 Also $\phi > 0$ in $\overline{\Om}$ and hence $m(\la)$ can be chosen large enough so that $\phi \geq \psi$ in $\overline{\Om}$. Thus the ordered pair $[\psi, \phi]$ satisfies all the conditions of Theorem \ref{thm1} so that \eqref{problem} admits a minimal and maximal solution within the ordered interval $[\psi, \phi].$ \hfill\qed.
\section{Existence of Maximal Solution}
In this section, we are interested in the following boundary value problem:
\begin{align}\label{max_prblm}
    -\De_p u - \De_q u &= \la (u^\si-\frac{1}{u^\ba}) \ \text{in} \ \Om \nonumber \\
    u &>0 \ \text{in} \ \Om\tag{$\tilde{P}_\la$} \\
    u &=0 \ \text{on} \ \pa\Om \nonumber
\end{align}
where $0< \beta <1 ,\,  0<\sigma<q-1$ and $2<q<p<\infty.$ It is evident that $f(s)= s^{\sigma+\ba}-1$ satisfies the hypotheses $(H1)- (H2)$. We define the solution set $\mathcal{S}$ to be 
$$\mathcal{S}=\{u\in C^1_0(\overline{\Omega}) : u(x)\geq k d(x) \text{ for some } k>0\}.$$
Our aim in this section is to prove the existence of a positive maximal (global maximal) solution for the Dirichlet problem (\ref{max_prblm}) belonging to the set $\mathcal{S}.$
We can verify that  $\Psi_\la =\la^r (\phi_1 +\phi_1^\xp)$  defined in Lemma \ref{lemma_sub} is a positive subsolution of \eqref{max_prblm}. If there exists a supersolution $\Phi_\la$ such that $\Psi_\la\leq \Phi_\la,$ then 
 Theorem \ref{thm2} can be applied and it guarantees a maximal solution in the ordered interval $[\Psi_\la,\Phi_\la].$ Our aim in this section is to redefine our supersolution appropriately so that the resulting solution obtained by Theorem \ref{thm2} is in fact a maximal (global) solution. We make the following definition.
\begin{definition} We say that $\Phi_\la$ is a global supersolution to \eqref{max_prblm} if
\begin{itemize} \item[(i)] $\Phi_\la$ is a supersolution of \eqref{max_prblm} and 
\item[(ii)] if $u$ is any solution of \eqref{max_prblm}, then $u\leq \Phi_\la.$
\end{itemize} 
\end{definition}
We make the following observation which follows from Amman's fixed point theorem. 
\begin{remark} Suppose there exists a global supersolution to \eqref{max_prblm} which we call as $\Phi_\lambda.$ Also assume that $\Psi_\lambda\leq \Phi_\la$ where $\Psi_\la =\la^r (\phi_1 +\phi_1^\xp)$ is a subsolution of 
\eqref{max_prblm}. Then there exists a global maximal solution for \eqref{max_prblm} within the ordered interval $[\Psi_\la,\Phi_\lambda].$
\end{remark}
\noi We now prove that the solution of the following BVP 
\begin{align}\label{sup_est}
    -\De_p z - \De_q z &= \la z^\si \ \mbox{in} \ \Om\nonumber \\
    z &>0 \mbox{ in } \Omega\\
    z &=0 \ \mbox{on} \ \pa\Om\nonumber
\end{align}
 is a global supersolution to \eqref{max_prblm}. The existence and uniqueness of the positive solution of (\ref{sup_est}) can be proved via a global minimization technique and Lemma \ref{comparison}. Let us denote ${\Phi}_\la$ to be the unique solution of \eqref{sup_est} and since
$-\De_p \Phi_\la - \De_q \Phi_\la \geq \la (\Phi_\la^\si-\frac{1}{\Phi_\la^\ba})$,  $\Phi_\la$ serves as a supersolution for \eqref{max_prblm}. Using the comparison principle proved in Lemma \ref{comparison}, we can show that if $u$ is any solution of \eqref{max_prblm}, then $u\leq \Phi_\lambda.$ Thus, $\Phi_\la$ is a global supersolution of \eqref{max_prblm}.



 \noi\textbf{ Proof of Theorem \ref{max_thm}:} We first show that for large $\la$, $\Psi_\la \leq \Phi_\la$ in $\Om$. Since $\Psi_\la$ is a positive subsolution of \eqref{max_prblm}, we have,
 \begin{equation}
     -\De_p \Psi_\la -\De_q \Psi_\la \leq \la (\Psi_\la^\si- \frac{1}{\Psi_\la^\ba}) \leq \la \Psi_\la^\si\ \mbox{in}\ \Om
 \end{equation}
 and $\Psi_\la=0$ on $\pa\Om$, i.e, $\Psi_\la$ is a positive subsolution of \eqref{sup_est} also. Consequently, by lemma \ref{comparison}, we have $\Psi_\la \leq \Phi_\la$ in $\Om$ for $\la>>1$.

 
Now Theorem \ref{thm1} ensures the existence of a positive maximal solution for the problem \eqref{max_prblm} within the ordered interval $[\Psi_\la, \Phi_\la]$. Let us call this maximal solution $u_\la$. Again if $u$ is any other positive solution of \eqref{max_prblm}, then Lemma \ref{comparison} gives $u \leq \Phi_\la$ in $\overline{\Om}$. By the construction of $u_\la $, clearly $u\leq u_\lambda$ i.e , $u_\la$ is the global maximal solution of \eqref{max_prblm}.\\
Next suppose $\lambda$ is small enough and there exists a positive solution $u_\lambda$ for \eqref{max_prblm}. Since $\la\mapsto \Phi_\la$ is monotone increasing, so there exists some $M>0$ such that in $\Om$,
        \begin{equation*}
            -\De_p u_\la -\De_q u_\la =\la(u_\la^\si - \frac{1}{u_\la^\ba}) \leq\ \la u_\la^\si \leq\ \la \Phi_\la^\si \leq\ \la \Phi_1^\si\ \leq \la M
        \end{equation*}
        Let $v_\la$ be the solution of 
        \begin{align}
            -\De_p v_\lambda -\De_q v_\lambda &= \lambda M \ in \ \Om \nonumber\\
            v_\lambda &= 0 \ on \ \pa \Om \nonumber
        \end{align}
    Then by weak comparison principle, $u_\lambda \leq v_\lambda$ in $\Om$. Now by Proposition 10 of \cite{papageorgiou2020nonlinear}, as $\lambda \rightarrow{0+}$, $v_\lambda \rightarrow 0$ in $C^1_0(\overline{\Om})$. Thus by comparison principle any such solution $u_\la$ must be negative which is a contradiction. This completes the proof of Theorem \ref{max_thm}.
\hfill\qed

Now to get a deeper understanding of the solution of equation \eqref{sup_est} we define ${\tilde \Phi}_\la:=\la^{\frac{-1}{p-1-\si}}\Phi_\la$ so that
\begin{align}\label{eqn for limit}
        -\De_p \tilde{\Phi}_\la - \ga \De_q \tilde{\Phi}_\la &= \tilde{\Phi}_\la^\si \ \text{in}\ \Om\nonumber \\
        \tilde{\Phi}_\la &=0 \ \text{on} \ \pa\Om
\end{align}
where $\ga=\la^{\frac{q-p}{p-1-\si}}$ which tends to $0$ as $\la \rightarrow \infty$. Note that there exists a constant $M_0>0$, independent of $\la$, such that $\|\tilde{\Phi}_\la\|_{L^p(\Om)}\leq M_0$. The uniform  $L^\infty$ upper bound we obtain here is true for any $1<q<p<\infty.$\par
\begin{lemma}\label{l_infinity}
    There exists a positive constant $M$, independent of $\la$, such that $\|\tilde{\Phi}_\la\|_\infty \leq M$.
\end{lemma}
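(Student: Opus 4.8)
The plan is to establish the uniform $L^\infty$ bound by a Moser iteration started from the uniform estimate $\|\tilde{\Phi}_\la\|_{L^p(\Om)}\le M_0$, while keeping careful track of the fact that every constant can be chosen independent of the parameter $\ga=\la^{(q-p)/(p-1-\si)}$. Write $w:=\tilde{\Phi}_\la\ge 0$, which solves $-\De_p w-\ga\De_q w=w^\si$ in $\Om$ with $w=0$ on $\pa\Om$. For each fixed $\la$ the function $w$ lies in $C^1_0(\overline{\Om})\subset L^\infty(\Om)$ by construction (cf. Lemma \ref{uniform_bound}), so all the test-function manipulations below are legitimate; the entire point is that the resulting bound will depend only on $M_0,N,p,\si$ and $\Om$, and \emph{not} on $\la$.

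First I would test the weak formulation with $\va=w^{s}$ for $s\ge 1$. Since $\na(w^s)=s\,w^{s-1}\na w$ and $w\ge 0$, both terms on the left are nonnegative, and crucially the $q$-Laplacian contribution $\ga s\int_\Om w^{s-1}|\na w|^q$ is nonnegative; discarding it yields
\[
s\int_\Om w^{s-1}|\na w|^p\ \le\ \int_\Om w^{s+\si},
\]
an inequality from which $\ga$ has \emph{disappeared}. This sign-favorable role of the $q$-term is exactly what forces the final constant to be uniform in $\la$. Setting $\theta=(s-1+p)/p$ one has $w^{s-1}|\na w|^p=\theta^{-p}|\na(w^\theta)|^p$, so with the Sobolev inequality $\|w^\theta\|_{L^{p^*}}^p\le C_S\|\na(w^\theta)\|_{L^p}^p$ (taking first $p<N$, $p^*=Np/(N-p)$) I obtain the recursion
\[
\Big(\int_\Om w^{\theta p^*}\Big)^{p/p^*}\ \le\ C_S\,\frac{\theta^p}{s}\int_\Om w^{\,s+\si}.
\]
The exponent map $a\mapsto \tfrac{N}{N-p}(a-\si-1+p)$ carrying the right-hand base $a=s+\si$ to the left-hand base $\theta p^*$ satisfies, because $\si<q-1<p-1$, the strict gain $\tfrac{N}{N-p}(a-\si-1+p)-a=\tfrac{p}{N-p}a+\tfrac{N(p-1-\si)}{N-p}>0$; hence starting from $a_0=p$ (admissible since $1+\si<p$ gives $s_0=p-\si>1$) the iterated exponents $a_n$ increase geometrically to $+\infty$.

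Finally I would iterate the displayed inequality along $a_n$, take roots, and verify that the infinite product of the Moser constants $\big(C_S\theta_n^p/s_n\big)^{1/(\theta_n p)}$ converges and that the telescoping powers of $\|w\|_{L^{a_n}}$ collapse to a finite power of $\|w\|_{L^p}\le M_0$, yielding $\|w\|_\infty\le C(M_0,N,p,\si,\Om)$ independent of $\la$. I expect the main obstacle to be precisely this bookkeeping of constants together with confirming their independence of $\ga$, an issue settled once and for all by the discarding step above; a secondary point is the borderline Sobolev ranges $p\ge N$, which I would treat separately. Testing with $\va=w$ already gives $\int_\Om|\na w|^p\le\int_\Om w^{1+\si}$ with $1+\si<p$, so $\|\na w\|_{L^p}\le C(M_0)$; Morrey's embedding then bounds $\|w\|_\infty$ directly when $p>N$, while for $p=N$ the same iteration runs with any fixed finite Sobolev exponent in place of $p^*$.
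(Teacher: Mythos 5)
Your proposal is correct, but it takes a genuinely different route: you run a Moser iteration with power test functions, whereas the paper runs a De Giorgi--Stampacchia truncation argument (following Proposition 3.4 of the cited variational methods text). Both proofs pivot on the same two structural facts, which you identified correctly: the $\ga$-weighted $q$-Laplacian term is nonnegative against nonnegative test functions and can simply be discarded (this is exactly what makes every constant independent of $\la$), and the sublinearity $\si<p-1$ combined with the uniform bound $\|\tilde{\Phi}_\la\|_{L^p(\Om)}\leq M_0$. Concretely, the paper rescales $v=\eo\tilde{\Phi}_\la$ so that $\|v\|_{L^p}$ is small, tests with the truncations $w_{k+1}=(v-C_{k+1})^+$, $C_k=1-2^{-k}$, uses $v^\si\leq 1+v^{p-1}$, and derives the recursion $U_{k+1}\leq C^k U_k^{1+p/N}$ for $U_k=\|w_k\|^p_{L^p}$; the smallness of $U_0$ (through the choice of $\delta$) forces $U_k\to 0$, i.e.\ $v\leq 1$, giving the explicit bound $\tilde{\Phi}_\la\leq 1/\eo$. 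You instead test with $w^s$, convert to $\|\na(w^\theta)\|_{L^p}$, and iterate the Sobolev inequality along a geometrically growing sequence of exponents; no normalization or smallness is needed, the sub-unit exponent ratios $a_n/(\theta_n p)<1$ are absorbed by the usual $\max(1,\cdot)$ device, and the convergent infinite product of Moser constants (convergent precisely because $a_n$ grows geometrically, so $\sum_n \log(a_n)/a_n<\infty$) closes the argument --- this is the bookkeeping you flagged as the main obstacle, and it does go through. One point where your write-up is actually more careful than the paper: the paper's H\"older/Sobolev step uses the exponents $N/(N-p)$ and $Np/(N-p)$, hence implicitly assumes $p<N$, whereas you treat $p>N$ (Morrey, after the elementary gradient bound from testing with $w$) and $p=N$ (any finite Sobolev exponent) separately, so your argument covers the borderline cases the paper leaves unaddressed.
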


\begin{proof}
We adapt the idea of De Giorgi - Stampachia iteration method to obtain the uniform $L^\infty$ estimate. First of all we define $v^{(\la)}:=\eo\tilde{\Phi}_\la$ where $\eo=\frac{\delta^{\frac{1}{p}}}{M_0},$ so that $\|v^{(\la)}\|_{L^p(\Om)}\leq \delta^{\frac{1}{p}}$ where  $\delta$ is to be chosen later.  Hereafter, for convenience, we simply write $v$ for $v^{(\la)}$. Then $v$ satisfies the following BVP:
    \begin{align}\label{v_gamma}
        -\frac{1}{\eo^{p-1-\si}} \De_p v -\frac{\ga}{\eo^{q-1-\si}} \De_q v &= v^\si\ \mbox{in}\ \Om\nonumber\\
        v &=0\ \mbox{on}\ \pa\Om
    \end{align}
    For $k\in\mathbb{N}$, we define $C_k:= 1- 2^{-k}$, $v_k:= v-C_k$, $w_k:= v_k^+$ and $U_k:= \|w_k\|_{L^p(\Om)}^p$. Then, $0\leq w_k\leq |v|+1$ in $\Om$ and $w_k=0$ on $\pa\Om.$ Clearly, $w_k\in W^{1,p}_0(\Om)$ and $w_{k+1}\leq w_k, \forall k\in \mathbb{N}$. Also we note that $\lim_{k\rar\infty} w_k= (v-1)^+$ and so by Lebesgue Dominated Convergence Theorem, $\lim_{k\rar\infty} U_k= \int_\Om [(v-1)^+]^p\ dx$.\par
    Now define $A_k:= \frac{C_{k+1}}{C_{k+1}-C_k}= 2^{k+1}-1,\ k\in \mathbb{N}$. Then $v< A_k w_k$ on the set $\{w_{k+1}>0\}$. Taking $w_{k+1}$ as test function in the weak formulation of \eqref{v_gamma}, we get,
    \begin{equation*}
        \frac{1}{\eo^{p-1-\si}} \int_\Om |\na w_{k+1}|^p\ dx + \frac{\ga}{\eo^{q-1-\si}} \int_\Om |\na w_{k+1}|^q\ dx = \int_\Om v^\si w_{k+1}\ dx
    \end{equation*}
    Since $\ga>0$ we have,
    \begin{align}\label{grad_ineq}
        \|\na w_{k+1}\|_{L^p(\Om)}^p &\leq \eo^{p-1-\si} \int_\Om v^\si w_{k+1}\ dx \leq \int_{\{w_{k+1}>0\}} v^\si w_{k+1}\ dx\nonumber\\
        &\leq \int_{\{w_{k+1}>0\}} (1+v^{p-1})w_{k+1}\ dx\nonumber\\
        &\leq \int_{\{w_{k+1}>0\}} (1+A_k^{p-1}w_k^{p-1})w_k\ dx\nonumber\\
        &\leq  |\{w_{k+1}>0\}|^{1-\frac{1}{p}} U_k^{\frac{1}{p}} + 2^{(k+1)(p-1)}U_k
    \end{align}
    Now, as in (3.64) of Proposition 3.4 from \cite{bisci2016variational}, we get,
    \begin{equation}\label{eq46}
        U_k \geq 2^{-(k+1)p}|\{w_{k+1}>0\}|
    \end{equation}
   Using the above estimate in \eqref{grad_ineq} we get 
\begin{equation}\label{eq47}
    \|\nabla w_{k+1}\|_{L^p(\Omega)}^p \leq c \cdot 2^{(k+1)(p-1)}U_k.
\end{equation}
    By H\"older's inequality with the exponents $\frac{N}{N-p}$ and $\frac{N}{p}$ we have, 
    \begin{equation}\nonumber
        U_{k+1}\leq \|w_{k+1}\|^{p}_{L^{\frac{Np}{N-p}}} |\{w_{k+1}>0\}|^{\frac{p}{N}} \leq c_0 \,(2^{\frac{p^2}{N}+p})^k \,U_k^{1+\frac{p}{N}}
    \end{equation}
    To derive the last inequality in the previous line we have used \eqref{eq46} and \eqref{eq47}. Therefore for some $C>1$ and $\al=\frac{p}{N},$
    \begin{equation}
        U_{k+1}\leq \, C^{k} U_k^{1+\al}
    \end{equation}
    Now we shall define $\delta:=C^{\frac{-1}{\alpha^2}}.$ Then following the induction argument as in \cite{bisci2016variational}, Proposition 3.4 we have $\lim_{k \rightarrow 0} U_k =0.$ We also know that $lim_{k\rightarrow \infty} U_k = \int_\Om [(v-1)^+]^p\ dx,$ thus $(v-1)^+=0$ a.e. in $\Om$ or $v\leq 1$ a.e. in $\Om$. Hence $\tilde{\Phi}_\la \leq \frac{1}{\eo}$ a.e. in $\Om$ and since $\Tilde{\Phi}_\lambda>0$ we have the required $L^\infty$ estimate. 
\end{proof}

\begin{theorem}\label{est}
Let $\Phi_\la$ be the solution of the equation as given in \eqref{sup_est}, $2<q<p<\infty$ and $\la_0$ be a positive real number. Then there exist positive constants $c_1,c_2, \la_0$ such that for all $\la\geq\la_0$ and for all $x\in\Om$,
    \begin{equation}
    c_1 \la^{\frac{1}{p-1-\si}}d(x)\leq \Phi_\la (x)\leq c_2 \la^{\frac{1}{p-1-\si}}d(x).
    \end{equation}
\end{theorem}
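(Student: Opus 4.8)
The plan is to pass to the rescaled function $\tilde{\Phi}_\la=\la^{-1/(p-1-\si)}\Phi_\la$, for which the claimed estimate is \emph{equivalent} to the $\la$-independent two-sided bound $c_1 d(x)\le\tilde{\Phi}_\la(x)\le c_2 d(x)$. Recall that $\tilde{\Phi}_\la$ solves \eqref{eqn for limit}, i.e. $-\De_p\tilde{\Phi}_\la-\ga\De_q\tilde{\Phi}_\la=\tilde{\Phi}_\la^\si$ with $\ga=\la^{(q-p)/(p-1-\si)}\to0$, and that Lemma \ref{l_infinity} supplies $\|\tilde{\Phi}_\la\|_\infty\le M$ uniformly in $\la$. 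Both inequalities will be obtained by comparison on a tubular neighbourhood $\Om_\de$ (where $d$ is smooth, $|\na d|=1$ and $\De d$ is bounded) against barriers built from $d$; the profiles are chosen so that the troublesome term $\ga\De_q$ carries a favourable sign, which is precisely what makes the constants uniform as $\ga\downarrow0$.

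For the upper bound, since $\tilde{\Phi}_\la^\si\le M^\si=:K_0$, I take the concave profile $G(d)=\tfrac1k(1-e^{-kd})$ and set $W=\mu G(d)$. A direct computation gives, for $s=p,q$, $-\De_s(G(d))=(G')^{s-2}e^{-kd}[(s-1)k-\De d]$, which is bounded below by a positive constant once $k>\sup_{\Om_\de}|\De d|/(q-1)$. Hence $-\De_pW-\ga\De_qW\ge\mu^{p-1}(-\De_pG(d))\ge K_0$ for $\mu$ large and every $\ga\ge0$, so $W$ is a supersolution of $-\De_pu-\ga\De_qu=K_0$ on $\Om_\de$. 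Enlarging $\mu$ so that $\mu G(\de)\ge M$ makes $W\ge\tilde{\Phi}_\la$ on $\pa\Om_\de$, and the weak comparison principle yields $\tilde{\Phi}_\la\le W\le\mu d$ on $\Om_\de$ (using $G(d)\le d$); on $\{d\ge\de\}$ the bound $\tilde{\Phi}_\la\le M\le(M/\de)d$ is immediate. This gives the full upper bound, uniform in $\la$.

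The lower bound is the main obstacle. Using instead the convex profile $g(d)=\tfrac1k(e^{kd}-1)$, the analogous computation gives $-\De_s(g(d))=-(g')^{s-2}e^{kd}[(s-1)k+\De d]\le0$ for both $s=p,q$ once $k$ is large, so $Ag(d)$ is $(p,q)$-subharmonic on $\Om_\de$ for every $A>0$ and every $\ga\ge0$; since $g(d)\ge d$, a weak comparison on $\Om_\de$ reduces the entire lower estimate to a single \emph{uniform interior positivity}, namely $\tilde{\Phi}_\la\ge m_0>0$ on $\{d=\de\}$, with $A=m_0/g(\de)$. Establishing this non-degeneracy as $\la\to\infty$ (equivalently $\ga\downarrow0$) is the crux: a priori $\tilde{\Phi}_\la$ could collapse to $0$, and the indefinite sign of $\De_q$ blocks a naive comparison with the pure $p$-Laplacian.

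I would rule out the collapse variationally. As $\si+1<q<p$, the solution $\tilde{\Phi}_\la$ is the global minimizer of the sublinear energy $J_\ga(u)=\tfrac1p\|\na u\|_{L^p}^p+\tfrac\ga q\|\na u\|_{L^q}^q-\tfrac1{\si+1}\int_\Om(u^+)^{\si+1}$, and testing with a fixed $t v_0$ shows $J_\ga(\tilde{\Phi}_\la)\le-c_0<0$ uniformly in $\ga\in[0,\ga_0]$; hence $\int_\Om\tilde{\Phi}_\la^{\si+1}\ge(\si+1)c_0$. Combined with the upper bound, the boundary-layer contribution $\int_{\Om_\de}\tilde{\Phi}_\la^{\si+1}\le c_2^{\si+1}\int_{\Om_\de}d^{\si+1}$ becomes negligible as $\de\to0$, so a definite amount of mass sits in $\{d\ge\de\}$. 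I would then upgrade this interior $L^{\si+1}$ mass to the pointwise bound $m_0$ either through a $\ga$-uniform weak Harnack inequality for the nonnegative supersolution $\tilde{\Phi}_\la$ of $-\De_pu-\ga\De_qu\ge0$, chained over the connected set $\{d\ge\de\}$, or by compactness: the uniform $C^{1,\al}(\overline{\Om})$ bounds of \cite{giacomoni2021sobolev} and \cite{lieberman1991natural} force a subsequence $\tilde{\Phi}_{\la_n}\to\Phi_*$ in $C^1$, where $\Phi_*$ solves $-\De_p\Phi_*=\Phi_*^\si$ and satisfies $\int_{\{d\ge\de\}}\Phi_*^{\si+1}>0$, so $\Phi_*\not\equiv0$ and, by the strong maximum principle \cite{vazquez1984strong}, $\Phi_*>0$ on $\{d\ge\de\}$; $C^1$ convergence transfers the bound to all large $\la$, the remaining compact range being handled by the strict positivity of each individual solution in $\mathcal{P}^\circ$. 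The decisive difficulties throughout are the $\ga$-uniformity of every constant and the indefinite sign of $\De_q$, both dissolved by the monotone convex/concave distance profiles together with the variational non-degeneracy.
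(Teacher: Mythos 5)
Your proposal is correct in substance, but it takes a genuinely different route from the paper, most notably for the lower bound. The paper's upper bound is obtained by comparing $\tilde{\Phi}_\la$ with the solution $w$ of $-\De_p w-\ga\De_q w=M_1$ and then citing Proposition 2.1 of \cite{Indulekha} for $w\leq c_2 d(x)$ with $c_2$ independent of $\ga$; you instead reprove this estimate directly with the concave exponential barrier $\mu G(d)$ on a collar, which is self-contained and equally valid. The real divergence is the lower bound: the paper never needs interior non-degeneracy, Harnack inequalities, or compactness. It constructs the explicit global subsolution $\xi_\la=\epsilon\la^{r_0}(\phi_1+\phi_1^{\al})$, $1<\al<2$, from the first Dirichlet eigenfunction of $-\De$ (mirroring Lemma \ref{lemma_sub}: $L_p(\phi_1)<-m$ and $L_q(\phi_1)<0$ near $\pa\Om$ by Vazquez's $|\na\phi_1|>0$, and a choice of small $\epsilon$ together with $1+r_0(\si-s+1)\geq 0$ in the interior), and then applies the Picone-type comparison Lemma \ref{comparison} to the unscaled problem \eqref{sup_est} to conclude $\xi_\la\leq\Phi_\la$ for all $\la\geq 1$ in one stroke. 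This is shorter, fully quantitative, and all constants are explicit; its price is the eigenfunction trick, which is exactly what forces $q>2$ (so that $\De_s\phi_1\in L^\infty$). Your route — convex barrier reducing everything to a uniform interior bound $m_0$, then energy negativity of the global minimizer, mass localization via the upper bound, and a Harnack-chaining or compactness argument passing to the limit equation $-\De_p\Phi_*=\Phi_*^\si$ with Vazquez's strong maximum principle — is heavier and yields non-constructive constants, but it avoids the eigenfunction construction entirely and would in principle work for any $1<q<p$ (the restriction $q\geq 2$ re-enters only through Lemma \ref{comparison}, which you still need to identify $\tilde{\Phi}_\la$ with the global minimizer of $J_\ga$).

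Two points in your argument deserve flagging, though neither is fatal. First, the step you yourself call the crux rests on $\ga$-uniform $C^{1,\al}$ estimates (or a $\ga$-uniform weak Harnack constant) for the operator $-\De_p-\ga\De_q$ as $\ga\downarrow 0$; this is true — the structure function $g(t)=t^{p-1}+\ga t^{q-1}$ satisfies $q-1\leq tg'(t)/g(t)\leq p-1$ uniformly for $\ga\in[0,1]$, so Lieberman's conditions in \cite{lieberman1991natural} hold with $\ga$-independent constants — but it needs this verification, not merely a citation, since the references treat fixed coefficients. Second, your reduction statement is slightly imprecise: positivity $\tilde{\Phi}_\la\geq m_0$ on the level set $\{d=\de\}$ only gives the bound $c_1 d$ inside the collar $\Om_\de$; for $\{d\geq\de\}$ you need $m_0$ on that whole region, which your compactness argument does deliver, so the conclusion stands once the statement is read accordingly.
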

\begin{proof}
    From lemma \ref{l_infinity}, there exists a positive constant $M_1$ such that $\tilde{\Phi}_\la^\si \leq M_1$ a.e. in $\Om$. Let $w$ be the weak solution of
    \begin{align}
        -\De_p w - \ga \De_q w &= M_1 \ \text{in} \ \Om\nonumber \\
        w &=0 \ \text{on} \ \pa\Om\nonumber
    \end{align}
    Therefore, by weak comparison principle,
    $\tilde{\Phi}_\la(x) \leq w(x)$, $\forall x\in \Om$.
    Now following the ideas of the proof in Proposition 2.1 of \cite{Indulekha}, we get that for some positive constant $c_2$, independent of $\gamma,$  and for all $x\in \Omega,$ $\tilde{\Phi}_\la(x) \leq w(x) \leq c_2 d(x).$ i.e 
    \begin{align}\label{upper_bound}
      \Phi_\la (x) &\leq c_2 \la^{\frac{1}{p-1-\si}}d(x)
    \end{align}
    In order to obtain a lower bound we resonate some of the ideas used in Lemma \ref{lemma_sub}.
    Define 
    \begin{equation}
        \xi_\la:=\epsilon \la^{\frac{1}{p-1-\sigma}} (\phi_1 + \phi_1^\al) = \epsilon \la^{r_0} (\phi_1 + \phi_1^\al)
    \end{equation}
    where $1<\al<2$, $r_0= \frac{1}{p-1-\si}$ and $\phi_1$ is the first eigen function of $-\De$ such that $\|\phi_1\|_\infty=1$. \\
    \underline{Claim: }  $\xi_\la$ is a subsolution of \eqref{sup_est} for some admissible range of $\la$.
    \par Proceeding same as lemma \ref{lemma_sub} we get, for $s= p, q$,
    \begin{equation}\label{s_laplacian}
        -\De_s \xi_\la = \epsilon^{s-1}\la^{r_0(s-1)} \phi_1^{\al-2} L_s(\phi_1)
    \end{equation}
    where $L_s(\phi_1)= (1+\al\phi_1^{\al-1})^{s-1}\phi_1^{2-\al}[-\De_s \phi_1] - \al(\al-1)(s-1)|\na\phi_1|^s(1+\al\phi_1^{\al-1})^{s-2}$ and since $s>2$, there exist $\delta>0$ and $m>0$ such that $L_p(\phi_1)<-m$ and $L_q(\phi_1)<0$ in $\Om_\delta$. Therefore, $\xi_\la$ being non-negative by definition, we have,
    \begin{equation}\label{near_bdry}
        -\De_p \xi_\la -\De_q \xi_\la \leq -\frac{\epsilon^{s-1}\la^{r_0(s-1)}m}{\phi_1^{2-\al}} \leq \la \xi_\la^\si\ \mbox{in } \Om_\delta
    \end{equation}
    On the other hand, in $\Om\setminus\Om_\delta$, there exists $\mu>0$ such that $\phi_1>\mu$. Then since $\De_s \phi_1 \in L^\infty(\Om)$ as $2<q<p$, from \eqref{s_laplacian} we get,
    \begin{equation}\label{c_s}
        -\De_s \xi_\la \leq \frac{\epsilon^{s-1}\la^{r_0(s-1)}c_s}{\overline{\mu}} \mbox{ in } \Om\setminus\Om_\delta
    \end{equation}
    for some positive constants $c_s$ (which depends on $s$ only) and $\overline{\mu}= \mu^{2-\al}$. Since in $\Om\setminus\Om_\delta$, $\phi_1>\mu$, so $\xi_\la\geq\epsilon\la^{r_0} \mu$ and consequently $\epsilon^\sigma\la^{r_0\si}\mu^\si\leq\xi_\la^\si.$
   First we choose $\epsilon\in(0,1)$ such that   $\frac{2c_s\epsilon^{s-1-\si}}{\overline{\mu}\mu^\si}\leq 1$ for $s=p,q$. Also note that $1+r_0(\si-s+1)\geq 0$ for $s=p,q$. Therefore from \eqref{c_s} we have for $\la\geq 1$,
    \begin{equation*}
        -\De_s \xi_\la \leq \frac{\epsilon^{s-1} \la^{r_0(s-1) }c_s}{\overline{\mu}} \leq \frac{\la}{2} (\epsilon^\si \la^{r_0\si}\mu^\si) \leq \frac{\la\xi_\la^\si}{2}
    \end{equation*}
    \begin{equation}\label{int}
        \Rightarrow -\De_p \xi_\la -\De_q \xi_\la \leq \la \xi_\la^\si \mbox{ in } \Om\setminus\Om_\delta
    \end{equation}
    Thus \eqref{near_bdry} and \eqref{int} together establish that for $\la\geq 1$, $\xi_\la$ is a positive subsolution of \eqref{sup_est}. Again $\Phi_\la$ is defined to be the positive solution of the same equation \eqref{sup_est}. Then by the comparison lemma \ref{comparison} we have for $\la\geq1$, $\xi_\la\leq \Phi_\la$ or 
    $$\Phi_\la \geq \epsilon\la^{r_0}(\phi_1+\phi_1^\al)\, \geq \,\epsilon\la^{\frac{1}{p-1-\si}} \phi_1 $$
     Now since $\phi_1$ is the first eigenfunction of $-\De$, so taking $\la_0=1$, there exists a positive constant $c_1$ such that for all $\la\geq \la_0$,
    \begin{equation}\label{lower_bound}
        c_1 \la^{\frac{1}{p-1-\si}} d(x) \leq \Phi_\la (x) \mbox{ for all } x\in\Om.
    \end{equation}
    Thus combining \eqref{upper_bound} and \eqref{lower_bound} we get the desired estimate.

\end{proof}

\section{Appendix}

\begin{lemma}\label{comparison}
    Suppose $f:[0,\infty)\rightarrow \mathbb{R}$ is a continuous function with $f(0)=0$ and $f(s)s^{1-q}$ is non-increasing in $(0,\infty)$ and $u_1, u_2 \in W^{1,p}_0(\Om) \cap C(\overline{\Om})$ satisfy
    \begin{align}\label{u1}
        -\De_p u_1-\De_q u_1 &\leq f(u_1)\ \text{in}\ \Om\nonumber\\
        u_1 &>0\ \text{in}\ \Om
    \end{align}
    and
    \begin{align}\label{u2}
        -\De_p u_2-\De_q u_2 &\geq f(u_2)\ 
         \text{in}\ \Om\nonumber\\
        u_2 &>0\ \text{in}\ \Om
    \end{align}
   Then $u_1\leq u_2$ \text{in} $\overline{\Om}$.
\end{lemma}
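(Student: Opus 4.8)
The plan is to run a Díaz--Saá type argument adapted to the $(p,q)$ operator, testing each differential inequality against a pair of $q$-homogeneous test functions chosen so that the hypothesis on $f$ controls the sign of the reaction terms. Suppose, for contradiction, that the open set $\Om^+:=\{x\in\Om:\,u_1(x)>u_2(x)\}$ has positive measure. On $\Om^+$ I would use
\[
\va_1:=\frac{(u_1^q-u_2^q)^+}{u_1^{q-1}},\qquad \va_2:=\frac{(u_1^q-u_2^q)^+}{u_2^{q-1}}
\]
as test functions in the weak formulations of \eqref{u1} and \eqref{u2} respectively. A first technical step is to justify that $\va_1,\va_2\in W^{1,p}_0(\Om)$: since $u_1,u_2$ vanish on $\pa\Om$, the quotients must be controlled near the boundary, which I would handle by first replacing $u_i$ with $u_i+\var$, carrying out all estimates at this level, and letting $\var\to0^+$.

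Subtracting the two tested inequalities, the right-hand side becomes $\int_{\Om^+}(u_1^q-u_2^q)\big(\tfrac{f(u_1)}{u_1^{q-1}}-\tfrac{f(u_2)}{u_2^{q-1}}\big)\,dx$. Because $s\mapsto f(s)s^{1-q}$ is non-increasing and $u_1>u_2>0$ on $\Om^+$, the bracket is $\le0$ while $u_1^q-u_2^q>0$, so the entire right-hand side is $\le0$. The left-hand side splits into a $p$-part and a $q$-part, each of Díaz--Saá form
\[
\int_{\Om^+}\Big(|\na u_1|^{m-2}\na u_1\cdot\na\va_1-|\na u_2|^{m-2}\na u_2\cdot\na\va_2\Big)\,dx,\qquad m\in\{p,q\}.
\]

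The crux is to show that both integrals are $\ge0$. For $m=q$ this is the classical Díaz--Saá inequality. For $m=p$ the test functions are "mismatched'' (built from $q$-th powers), and this is the step I expect to be the main obstacle. I would resolve it by hidden convexity. Along the curve $\rho_\theta:=\big((1-\theta)u_1^q+\theta u_2^q\big)^{1/q}$ one has that $\rho_\theta^{q-1}\na\rho_\theta=(1-\theta)u_1^{q-1}\na u_1+\theta u_2^{q-1}\na u_2$ is affine in $\theta$ and $\rho_\theta^q$ is affine and positive, so pointwise $g_m(\theta):=|\na\rho_\theta|^m=L(\theta)^m/R(\theta)^{(q-1)m/q}$ with $L$ convex (a norm of an affine map) and $R$ positive affine. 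The monomial $(u,v)\mapsto u^m v^{-(q-1)m/q}$ is jointly convex on the positive quadrant precisely when $m\ge q$; hence for both $m=p$ and $m=q$ the map $\theta\mapsto g_m(\theta)$ is convex. Differentiating $g_m$ at the endpoints identifies $g_m'(1)-g_m'(0)$ with exactly the integrand above up to the factor $m/q$, so convexity yields the required sign. This is the single place where the assumption $q\le p$ is essential.

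Combining the two sign computations gives $0\le(\text{left-hand side})\le(\text{right-hand side})\le0$, forcing both Díaz--Saá integrands to vanish a.e.\ on $\Om^+$. The equality case of the strict convexity above forces $\na u_1/u_1=\na u_2/u_2$ a.e.\ on $\Om^+$, i.e.\ $u_1/u_2$ is locally constant there; since $u_1=u_2$ on $\pa\Om^+\cap\Om$ by continuity, that constant must equal $1$, contradicting $u_1>u_2$ on $\Om^+$. Hence $\abs{\Om^+}=0$, so $u_1\le u_2$ in $\Om$ and, by continuity, in $\overline{\Om}$.
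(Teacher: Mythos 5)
Your proof is correct, and it follows the same Díaz--Saá skeleton as the paper: the same contradiction set, the identical pair of mismatched test functions $(u_1^q-u_2^q)^+/u_1^{q-1}$ and $(u_1^q-u_2^q)^+/u_2^{q-1}$, and the same sign argument for the right-hand side using the monotonicity of $f(s)s^{1-q}$. Where you genuinely diverge is in the key ingredient controlling the left-hand side. The paper treats the two parts of the operator by citing two separate external results: the $q$-part via Lemma 4.2 of Lindqvist (stated for $q\geq 2$), which yields the quantitative remainder $\frac{1}{2^{q-1}-1}\int_E \abs{u_1\na u_2-u_2\na u_1}^q/(u_1^q+u_2^q)\,dx$, and the $p$-part via the generalized Picone inequality of Bobkov--Tanaka. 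You instead derive both signs from one self-contained hidden-convexity computation, namely convexity of $\theta\mapsto\abs{\na\rho_\theta}^m$ along $\rho_\theta=((1-\theta)u_1^q+\theta u_2^q)^{1/q}$ for $m\geq q$; your criterion that $(u,v)\mapsto u^m v^{-(q-1)m/q}$ is jointly convex on the positive quadrant exactly when $m\geq q$ is correct (the Hessian determinant is proportional to $m/q-1$), and your endpoint-derivative identification does reproduce the two integrands up to the factor $m/q$. Your route buys three things: it is self-contained, it never needs $q\geq 2$ (only $1<q\leq p$), and you are more careful than the paper about admissibility of the test functions near $\pa\Om$ (the $\var$-shift regularization, which the paper passes over silently). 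The paper's route buys a cleaner endgame: the explicit Lindqvist remainder makes the equality case immediate, since vanishing of the integrals forces $u_1\na u_2=u_2\na u_1$ pointwise, whereas you must extract the same rigidity from the equality case of convexity -- the one step you state loosely. It does go through: for $m=q$ the monomial is $1$-homogeneous, and equality forces proportionality of $(L,R)$ at the endpoints together with alignment of $u_1^{q-1}\na u_1$ and $u_2^{q-1}\na u_2$, which is precisely $\na u_1/u_1=\na u_2/u_2$; alternatively, for $m=p>q$ strict convexity gives rigidity directly. Finally, note that you and the paper share the same small residual gap at the very end: concluding that the constant on a component $E_i$ equals $1$ uses $u_1=u_2$ on $\pa E_i\cap\Om$, which presupposes $\pa E_i\cap\Om\neq\emptyset$; the degenerate case $E_i=\Om$ (so $u_1=cu_2$ globally with $c>1$) needs a separate one-line argument, e.g.\ plugging $u_1=cu_2$ into the two inequalities and using $f(cu_2)\leq c^{q-1}f(u_2)$ together with $c^{p-1}>c^{q-1}$.
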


\begin{proof}
    Define $E=\{ x\in \Om : u_1(x) > u_2(x)\}$ and consider $w_1, w_2 \in W^{1,q}_0(\Om)$ defined by
    \begin{center}
        $w_1= \frac{(u_1^q -u_2^q)^+}{u_1^{q-1}}$ and $w_2= \frac{(u_1^q -u_2^q)^+}{u_2^{q-1}}$
    \end{center}
    Then from \eqref{u1} and \eqref{u2}, we get,
    \begin{eqnarray}
        \int_E |\na u_1|^{p-2} \na u_1 \cdot \na w_1\ dx +  \int_E |\na u_1|^{q-2} \na u_1 \cdot \na w_1\ dx \leq \int_E f(u_1)w_1\ dx \nonumber\\
        \int_E |\na u_2|^{p-2} \na u_2 \cdot \na w_2\ dx +  \int_E |\na u_2|^{q-2} \na u_2 \cdot \na w_2\ dx \geq \int_E f(u_2)w_2\ dx \nonumber
    \end{eqnarray}
    Subtracting the latter equation from the former, we have,
    \begin{multline}\label{inequality}
        \int_E \Bigl[ |\na u_1|^{p-2} \na u_1 \cdot \na w_1 - |\na u_2|^{p-2} \na u_2 \cdot \na w_2 \Bigr]\ dx + \int_E \Bigl[ |\na u_1|^{q-2} \na u_1 \cdot \na w_1 \\
        - |\na u_2|^{q-2} \na u_2 \cdot \na w_2 \Bigr]\ dx \leq \int_E \Bigl[\frac{f(u_1)}{u_1^{q-1}} - \frac{f(u_2)}{u_2^{q-1}} \Bigr]\bigl(u_1^q -u_2^q\bigr)\ dx
    \end{multline}
    Then from lemma 4.2 in \cite{lindqvist1990equation}, for $q\geq 2$, we get,
    \begin{multline}\label{I2}
        \int_E \Bigl[ |\na u_1|^{q-2} \na u_1 \cdot \na w_1 - |\na u_2|^{q-2} \na u_2 \cdot \na w_2 \Bigr]\ dx\\
        \geq \frac{1}{2^{q-1}-1}\int_E \frac{1}{(u_1^q + u_2^q)}\big| u_1 \na u_2 - u_2 \na u_1 \big|^q\ dx \geq 0
    \end{multline}
    and from Theorem 1.2 of \cite{bobkov2020generalized}, 
    \begin{equation*}
        |\na u_1|^{p-2} \na u_1\cdot \na\Bigl(\frac{u_2^q}{u_1^{q-1}}\Bigr) \leq \frac{q}{p}|\na u_2|^p + \frac{p-q}{p}|\na u_1|^p
    \end{equation*}
    so that
    \begin{equation}\label{I1}
        \int_E \Bigl[ |\na u_1|^{p-2} \na u_1 \cdot \na w_1 - |\na u_2|^{p-2} \na u_2 \cdot \na w_2 \Bigr]\ dx \geq 0
    \end{equation}
    Now as $f(s)s^{1-q}$ is non-increasing, so in \eqref{inequality} the $R.H.S.\leq 0$. Therefore, combining \eqref{inequality}, \eqref{I2} and \eqref{I1}, we get,
    \begin{equation*}
        0\leq \frac{1}{2^{q-1}-1} \int_E \frac{1}{(u_1^q + u_2^q)}\big| u_1 \na u_2 - u_2 \na u_1 \big|^q\ dx \leq 0
    \end{equation*}
    \begin{equation*}
        \Rightarrow | u_1 \na u_2 - u_2 \na u_1 |=0\ in\ E
    \end{equation*}
    so that if $E$ is non-empty, then on each connected component $E_i$ of $E$, $u_1=c_i u_2$ but since $u_1 =u_2$ on $\pa E_i \cap \Om$, so $u_1=u_2$ in $E_i$ which contradicts the definition of $E$. Hence, $E=\phi$ or in other words, $u_1(x)\leq u_2(x)$ in $\overline{\Om}$.
\end{proof}

\bibliographystyle{plain}
\bibliography{References}	

\end{document}